\def\ifundefined#1{\expandafter\ifx\csname#1\endcsname\relax}
\theoremstyle{plain}
\newtheorem{theorem}{Theorem}[section]
\newtheorem{proposition}[theorem]{Proposition}
\newtheorem{lemma}[theorem]{Lemma}
\newtheorem{corollary}[theorem]{Corollary}
\newtheorem{theorem*}{Theorem}
\theoremstyle{definition}
\newlength{\normalparindent}
\numberwithin{equation}{section}
\mathchardef\sa="303A
\renewcommand{\epsilon}{\varepsilon}
\newcommand{\R}{\ensuremath{\mathbf{R}}}
\newcommand{\ip}[3]{\ensuremath{( {#1}, \, {#2} )}}
\newcommand{\lm}[1][m]{\ensuremath{\lambda_{#1}}}
\newcommand{\um}[1][m]{\ensuremath{\mu_{#1}}}
\newcommand{\tk}[1][k]{\ensuremath{\tau_{#1}}}
\newcommand{\Jm}[1][m]{\ensuremath{J_{#1}}}
\newcommand{\kk}[1][k]{\ensuremath{\kappa_{#1}}}
\newcommand{\Llm}[1][m]{\ensuremath{\Lambda_{#1}(\Oe)}}
\newcommand{\Lum}[1][m]{\ensuremath{\Lambda_{#1}(\Ot)}}
\newcommand{\vp}{\ensuremath{\varphi}}
\newcommand{\vs}{\ensuremath{\psi}}
\newcommand{\Psh}{\ensuremath{\Psi}}
\newcommand{\Ph}{\ensuremath{\Phi}}
\newcommand{\Phz}{\ensuremath{\Phi_p}}
\newcommand{\Phr}{\ensuremath{\Phi_h}}
\newcommand{\B}{\ensuremath{B}}
\renewcommand{\S}{\ensuremath{S}}
\newcommand{\Se}{\ensuremath{S_1}}
\newcommand{\St}{\ensuremath{S_2}}
\newcommand{\Sj}{\ensuremath{S_j}}
\newcommand{\Ke}{\ensuremath{K_1}}
\newcommand{\Kt}{\ensuremath{K_2}}
\newcommand{\Kj}[1][j]{\ensuremath{K_{#1}}}
\newcommand{\N}{\ensuremath{N}}
\renewcommand{\H}{\ensuremath{H}}
\newcommand{\He}{\ensuremath{H_1}}
\newcommand{\Ht}{\ensuremath{H_2}}
\newcommand{\Hj}[1][j]{\ensuremath{H_{j}}}
\newcommand{\Xj}[1][j]{\ensuremath{X_{#1}}}
\newcommand{\Xm}[1][m]{\ensuremath{\mathcal{X}_{#1}}}
\newcommand{\Oe}{\ensuremath{\Omega_1}}
\newcommand{\Ot}{\ensuremath{\Omega_2}}
\newcommand{\Oj}[1][j]{\ensuremath{\Omega_{#1}}}
\newcommand{\delt}{\ensuremath{\delta}}
\newcommand{\deltf}{\ensuremath{\kappa}}
\newcommand{\Om}{\ensuremath{\Omega}}
\newcommand{\hhe}{\ensuremath{\partial_{x_n} \Ke \Se \vp}}
\newcommand{\hht}{\ensuremath{\partial_{x_n} \Kt \St \vp}}
\newcommand{\hhj}{\ensuremath{\partial_{x_n} \Kj \Sj \vp}}
\newcommand{\Bk}{\ensuremath{B}}
\newcommand{\Bkh}{\ensuremath{\frac{1}{2}B}}
\title{Asymptotics of Hadamard Type for Eigenvalues of the Neumann Problem on $C^1$-domains for Elliptic Operators}
\author{Johan Thim\footnote{\tt johan.thim@liu.se}}
\affil{\small Department of Mathematics, University of Link\"{o}ping, Link\"{o}ping, Sweden}
\date{\today}
\begin{document}

\maketitle

\begin{abstract}
\noindent
This article investigates how the eigenvalues of the Neumann problem for an elliptic operator depend on the domain in the case when the domains involved are of class~$C^1$. We consider the Laplacian and use results developed previously for the corresponding Lipschitz case. In contrast with the Lipschitz case however, in the~$C^1$-case 
we derive an asymptotic formula for the eigenvalues when the domains are of class~$C^1$. Moreover, as an application we consider the
case of a~$C^1$-perturbation when the reference domain is of class~$C^{1,\alpha}$. 

\bigskip

\noindent
{\bf Keywords}: Hadamard formula; Domain variation; Asymptotics of eigenvalues; Neumann problem; $C^1$-domains

\medskip

\noindent
{\bf MSC2010}: 35P05, 47A75, 49R05, 47A55

\end{abstract}


\section{Introduction}
The results presented in this article are based on an abstract framework  
for eigenvalues of the Neumann problem previously developed by Kozlov and Thim~\cite{Kozlov2014},
where we considered applications to Lipschitz- and $C^{1,\alpha}$-domains. However, the corresponding result
for~$C^1$-domains was omitted. In this study we present an asymptotic formula of Hadamard type for perturbations in the case when the
domains are of class~$C^1$. We also apply this theorem to the case when the reference domain is~$C^{1,\alpha}$, which
simplifies the involved expressions.

Partial differential equations
are typically not easily solvable when the domain is merely~$C^1$. Indeed, existence results for
Laplace's equation on a general~$C^1$-domain with~$L^p$-data on the boundary was only finally 
resolved by Fabes et al.~\cite{Fabes1978} in~1978.
This problem was difficult due to the fact that proving that the layer potentials define compact operators (so Fredholm theory
is applicable similar to the~$C^{1,\alpha}$-case) was rather technical. The results are based on estimates for the 
Cauchy integral on Lipschitz curves and we only obtain~$L^p$-estimates for the gradient. As a consequence, the problem
of eigenvalue dependence on a~$C^1$-domain becomes difficult. 

Hadamard~\cite{Hadamard1968,Mazya1998} studied a special type of perturbations of domains with smooth boundary in the early twentieth century,
where the perturbed domain~$\Omega_{\epsilon}$ is represented by~$x_{\nu} = h(x')$
where~$x' \in \partial \Omega_0$,~$x_{\nu}$ is the signed distance to the boundary 
($x_{\nu} < 0$ for~$x \in \Omega_0$), and~$h$ is a smooth
function bounded by a small parameter~$\epsilon$.
Hadamard considered the Dirichlet problem, 
but a formula of Hadamard-type for the first nonzero eigenvalue of the Neumann-Laplacian is given by
\[
\Lambda(\Omega_{\epsilon}) = \Lambda(\Omega_0) + 
	\int_{\partial \Omega_0} h \bigl( |\nabla \vp|^2 - \Lambda(\Omega_0) \vp^2  \bigr) dS 
	+ o(\epsilon),
\]
where $dS$ is the surface measure on~$\partial \Omega_0$ and~$\vp$ is an eigenfunction corresponding
to~$\Lambda(\Omega_0)$ such that~$\| \vp \|_{L^2(\Omega_0)} = 1$; 
compare with Grinfeld~\cite{Grinfeld2010}. 
In more general terms, eigenvalue dependence on domain perturbations is a classical and important 
problem going far back. Moreover, these problems are closely related to shape optimization; 
see, e.g., Henrot~\cite{Henrot2006}, and Soko\l{}owski and Zol\'esio~\cite{Sokolowski1992}, 
and references found therein.

Specifically, let~$\Oe$ and~$\Ot$ be domains in~$\R^n$,~$n \geq 2$,
and consider the spectral problems
\begin{equation}
\label{eq:maineq}
\left\{
\begin{aligned}
& -\Delta  u = \Llm[{}] u   & & \mbox{in } \Oe,\\
&\partial_{\nu}  u  =  0 & & \mbox{on } \partial \Oe
\end{aligned} \right.
\end{equation}
and
\begin{equation}
\label{eq:maineq2}
\left\{
\begin{aligned}
& -\Delta v = \Lum[{}] v  & & \mbox{in } \Ot,\\
&\partial_{\nu} v  =  0 & & \mbox{on } \partial \Ot,
\end{aligned} \right.
\end{equation}
where~$\partial_{\nu}$ is the normal derivative with respect to the outward normal. In the case of nonsmooth boundary, we consider
the corresponding weak formulations. The analogous Dirichlet problems have previously been 
considered~\cite{Kozlov2006,Kozlov2011,Kozlov2012,Kozlov2013}, however the Neumann problem requires a different approach as to
what one can use as a proximity quantity between the two domains and the operators involved.

We will require that the domains are close in the sense that the Hausdorff distance between the sets~$\Oe$ and~$\Ot$, i.e.,
\begin{equation}
\label{eq:d_def}
d = \max \{ \sup_{x \in \Oe} \inf_{y \in \Ot} |x-y|, \; \sup_{y \in \Ot} \inf_{x \in \Oe} |x-y| \},
\end{equation}
is small. 
If, e.g., the problem in~(\ref{eq:maineq}) has a discrete spectrum and the two domains~$\Oe$
and~$\Ot$ are close, then the problem in~(\ref{eq:maineq2}) has 
precisely~$\Jm[m]$ eigenvalues~$\Lum[k]$ close to~$\Llm$; see for instance Lemma~3.1 in~\cite{Kozlov2014}.
Here,~$\Jm[m]$ is the dimension of the eigenspace~$\Xj[m]$ corresponding to~$\Llm$.
The aim is to characterize the difference~$\Lum[k] - \Llm$ for~$k=1,2,\ldots,\Jm[m]$.

In a previous study~\cite{Kozlov2014}, we considered the cases when the domains are Lipschitz or~$C^{1,\alpha}$,
with~$0 < \alpha < 1$, as applications of an abstract framework. 
The main result is an asymptotic result for~$C^{1,\alpha}$-domains, where~$\Oe$ is a~$C^{1,\alpha}$-domain 
and~$\Ot$ is a Lipschitz perturbation of~$\Oe$
in the sense that the perturbed domain~$\Ot$ can be characterized by a function~$h$
defined on the boundary~$\partial \Oe$ such that every point~$(x',x_{\nu}) \in \partial \Ot$ is 
represented by~$x_{\nu} = h(x')$, where~$(x',0) \in \partial \Oe$ and~$x_{\nu}$ is the signed distance to~$\partial \Oe$
as defined above. 
Moreover, the function~$h$ is assumed to be Lipschitz continuous and satisfy~$|\nabla h| \leq C d^{\alpha}$. 
We proved that if the
problem in~{\rm(}\ref{eq:maineq}{\rm)} has a discrete spectrum and~$m$ is fixed, 
then there exists a constant~$d_0 > 0$ such that if~$d \leq d_0$, then 
\begin{equation}
\label{i:eq:holder_main_boundary}
\begin{aligned}
\Lum[k] - \Llm[m] = {} & 
\kk + O(d^{1+\alpha})
\end{aligned}
\end{equation}
for every~$k = 1,2,\ldots,\Jm[m]$.
Here~$\kk[{}] = \kk$ is an eigenvalue of the problem
\begin{equation}
\label{eq:T2_tk_intro}
\kk[{}] \ip{\vp}{\vs}{1} = 
\int_{\partial \Oe} h(x') \bigl( 
	\nabla \vp \cdot \nabla \vs 
	- \Llm \vp \vs
	\bigr) dS(x')
\quad \mbox{for all } \vs \in \Xj[m], 
\end{equation}
where~$\vp \in \Xj[m]$. Moreover, $\kk[1],\kk[2],\ldots,\kk[\Jm]$ in~{\rm(}\ref{i:eq:holder_main_boundary}{\rm)}
run through all eigenvalues of~{\rm(}\ref{eq:T2_tk_intro}{\rm)} counting their multiplicities; see Theorem~1.1 in~\cite{Kozlov2014}.

In the case when the domains are merely Lipschitz, we only obtain that 
there exists a constant~$C$, independent of~$d$, such that~$|\Lum[k]- \Llm[m]| \leq C d $
for every~$k=1,2,\ldots,\Jm$; see Corollary 6.11 in~\cite{Kozlov2014}.
Furthermore, in Section~6.7 of~\cite{Kozlov2014} we provide an example which shows that we can not get an asymptotic 
result of the type above for the Lipschitz case.

\subsection{New Results}
The main result of this article is proved in Section~\ref{s:mainresults}, where an asymptotic formula for~$\Lum - \Llm[k]$ in the case of~$C^1$-domains
is derived. The main term consists of extensions of eigenfunctions to~(\ref{eq:maineq}) and the remainder is of
order~$o(d)$; see Theorem~\ref{t:c1}. 
We suppose that~$\Ot$ is a Lipschitz perturbation of a~$C^1$-domain~$\Oe$ such that the 
Hausdorff distance~$d$ between~$\Oe$ and~$\Ot$ is small
and the outward normals~$n_1$ and~$n_2$ --- taken at the corresponding points of~$\Oe$ and~$\Ot$, respectively --- 
are comparable in the sense that~$n_1 - n_2 = o(1)$ as~$d \rightarrow 0$ (uniformly). 
If we also require that~$\Ot \subset \Oe$ to avoid the need for extension theorems, we obtain the following result. 

\begin{theorem}
\label{t:i:c1}
Suppose that~$\Oe$ is a~$C^1$-domain, that~$\Ot$ is as described above, and that~$\Ot \subset \Oe$.
In addition, assume that the problem in~{\rm(}\ref{eq:maineq}{\rm)} has a discrete spectrum and that~$m$ is fixed. 
Then there exists a constant~$d_0 > 0$ such that if~$d \leq d_0$, then 
\begin{equation}
\label{eq:i:c1_main}
\Lum[k] - \Llm  = \tk + o(d) 
\quad \mbox{for } k = 1,2,\ldots,\Jm. 
\end{equation}
Here,~$\tau = \tk$ is an eigenvalue of
\begin{equation}
\label{eq:i:c1_main_eig}
\tau \ip{\vp}{\vs}{1} = 
 \int_{\Oe \setminus \Ot} \bigl( 
	\nabla \vp \cdot \nabla \vs 
	 - \Llm \vp  \vs
	\bigr) \, dx
\quad \mbox{for all } \vs \in \Xj[m], 
\end{equation}
where~$\vp \in \Xj[m]$. 
Moreover,~$\tk[1], \tk[2], \ldots, \tk[\Jm]$ in~{\rm(}\ref{eq:i:c1_main}{\rm)}
run through all eigenvalues of~{\rm(}\ref{eq:i:c1_main_eig}{\rm)} counting their multiplicities.
\end{theorem}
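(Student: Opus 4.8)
The plan is to realize Theorem~\ref{t:i:c1} as an application of the abstract machinery from~\cite{Kozlov2014}, exactly in the spirit of the~$C^{1,\alpha}$-result~(\ref{i:eq:holder_main_boundary})--(\ref{eq:T2_tk_intro}), but with the boundary integral over~$\partial\Oe$ replaced by the volume integral over~$\Oe\setminus\Ot$ and with the error term weakened from~$O(d^{1+\alpha})$ to~$o(d)$. First I would set up the two sesquilinear forms: on~$\Oe$ the form~$a_1(u,v)=\int_{\Oe}\nabla u\cdot\nabla v\,dx$ with the quadratic form domain~$H^1(\Oe)$, and analogously~$a_2$ on~$\Ot$; since~$\Ot\subset\Oe$, restriction~$H^1(\Oe)\to H^1(\Ot)$ is the natural ``transfer'' map, so no extension operator is needed (this is exactly the reason for the hypothesis~$\Ot\subset\Oe$). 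I would then invoke the abstract perturbation lemma of~\cite{Kozlov2014}: the eigenvalues~$\Lum[k]$ near~$\Llm[m]$ are, up to an error controlled by the operator-norm discrepancy of the two resolvents, the eigenvalues of a~$\Jm\times\Jm$ matrix (equivalently, the eigenvalues~$\tau$ of the generalized eigenvalue problem on~$\Xj[m]$) whose entries are~$a_1(\vp,\vs)-a_2(\vp,\vs)-\Llm[m]\bigl((\vp,\vs)_{L^2(\Oe)}-(\vp,\vs)_{L^2(\Ot)}\bigr)$ for~$\vp,\vs\in\Xj[m]$. Since~$\Ot\subset\Oe$, both differences are integrals over~$\Oe\setminus\Ot$, which gives precisely the right-hand side of~(\ref{eq:i:c1_main_eig}).

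The substantive analytic work is to show two things. First, that the bilinear form in~(\ref{eq:i:c1_main_eig}) is of size~$O(d)$: the region~$\Oe\setminus\Ot$ is a thin collar of width comparable to~$d$ around~$\partial\Oe$, so~$|\Oe\setminus\Ot|=O(d)$, and since eigenfunctions~$\vp,\vs\in\Xj[m]$ of a~$C^1$-domain have~$\nabla\vp,\nabla\vs\in L^p(\Oe)$ for some~$p>2$ (this is the Fabes--Jerison--Kenig regularity alluded to in the introduction, the key point that makes the~$C^1$-case work at all), Hölder's inequality gives~$\int_{\Oe\setminus\Ot}|\nabla\vp||\nabla\vs|\,dx\le\|\nabla\vp\|_{L^p}\|\nabla\vs\|_{L^p}|\Oe\setminus\Ot|^{1-2/p}=O(d^{1-2/p})$ — wait, that is only~$o(d^{0})$, not~$O(d)$, so I must be more careful: the correct statement is that~$\tau=O(d)$ follows not from Hölder alone but from the fact that over the collar the gradient integral is~$O(d)$ once one also uses that~$\vp$ is (Hölder-)continuous up to the boundary and the trace is what controls the leading behaviour; more precisely one compares~$\int_{\Oe\setminus\Ot}(\cdot)\,dx$ with~$\int_{\partial\Oe}h(\cdot)\,dS+o(d)$ using that the collar is a graph over~$\partial\Oe$ of height~$h$ with~$h=O(d)$ and that~$\nabla\vp$ restricted to the collar converges in the appropriate averaged sense to its boundary trace by the~$C^1$-regularity. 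Second, I must show the abstract remainder is~$o(d)$: this requires estimating, in operator norm on~$\Xj[m]$ (or between the relevant resolvents), the discrepancy between the problems on~$\Oe$ and~$\Ot$, and showing it is~$o(d)$ — here the hypotheses that~$d\to 0$ and that the normals satisfy~$n_1-n_2=o(1)$ uniformly are exactly what is needed, because the normal-comparability prevents the kind of oscillatory Lipschitz behaviour that in the pure Lipschitz case (Section~6.7 of~\cite{Kozlov2014}) destroys any asymptotic formula.

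Concretely, the steps in order are: (1) recall the abstract setup and the reduction lemma from~\cite{Kozlov2014}, specializing the transfer operator to restriction since~$\Ot\subset\Oe$; (2) identify the~$\Jm\times\Jm$ model matrix and check that its entries are exactly the integrands in~(\ref{eq:i:c1_main_eig}) — this is a short computation using~$-\Delta\vp=\Llm[m]\vp$ weakly on~$\Oe$; (3) prove the collar estimate~$|\Oe\setminus\Ot|\le Cd$ and, using the~$C^1$-character of~$\Oe$ together with~$n_1-n_2=o(1)$, the parametrization of~$\Oe\setminus\Ot$ as a graph of height~$h$ over~$\partial\Oe$ with~$\|h\|_\infty=O(d)$; (4) invoke higher integrability~$\nabla\vp\in L^p$, $p>2$, of Neumann eigenfunctions on~$C^1$-domains (Fabes--Jerison--Kenig / Jerison--Kenig) to bound the form in~(\ref{eq:i:c1_main_eig}) by~$Cd$ and simultaneously to show the abstract remainder, which in~\cite{Kozlov2014} is estimated by a product of a proximity quantity and the~$L^p$-norms of the gradients, is~$o(d)$; (5) assemble: the eigenvalues of the model matrix are the~$\tk$, they are~$O(d)$, the remainder is~$o(d)$, hence~$\Lum[k]-\Llm[m]=\tk+o(d)$, and by the min-max correspondence between the model matrix and~(\ref{eq:i:c1_main_eig}) the~$\tk$ run through all eigenvalues of~(\ref{eq:i:c1_main_eig}) with multiplicity.

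The main obstacle will be step~(4): getting an \emph{honest}~$o(d)$ remainder (not merely~$O(d)$) in the~$C^1$-case, since unlike the~$C^{1,\alpha}$-case there is no Hölder exponent to spend, so the~$o(d)$ must come entirely from the qualitative facts that~$d\to 0$, that~$n_1-n_2\to 0$ uniformly, and that~$\nabla\vp$ lies in a better space than~$L^2$ but with no quantitative modulus — one has to combine the higher-integrability with a dominated-convergence / equi-integrability argument on the shrinking collar to extract the little-$o$. A secondary technical point is justifying that the reduction from~\cite{Kozlov2014}, which was stated for the Lipschitz and~$C^{1,\alpha}$ settings, applies verbatim here; I would check that its hypotheses are purely about the existence of the transfer operator and bounds on the form discrepancy, both of which hold under the present assumptions, so no new abstract result is required.
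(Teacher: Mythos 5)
There is a genuine gap, and it sits exactly where you flagged your ``main obstacle.'' The abstract reduction you invoke does not exist in the form you use it: the framework of~\cite{Kozlov2014} (Proposition~6.10 there, restated as Proposition~\ref{p:lipschitz} here) produces a model eigenvalue problem whose entries involve the \emph{perturbed} solution operator, i.e.\ integrals of~$\widetilde{\Kt \S \vp}$ and~$\nabla \widetilde{\Kt \S \vp}$ over~$\Oe \setminus \Ot$, with remainder~$O(d^{3/2})$ --- not the form~$a_1(\vp,\vs)-a_2(\vp,\vs)-\Llm\bigl((\vp,\vs)_{L^2(\Oe)}-(\vp,\vs)_{L^2(\Ot)}\bigr)$ built from the unperturbed eigenfunctions alone. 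The entire content of the~$C^1$ theorem is the replacement of~$\Kt\S\vp$ by~$\lm^{-1}\vp$ in that main term with an error~$o(d)$, equivalently the smallness of~$\B\vp=\Kt\S\vp-\S\Ke\vp$ \emph{together with its gradient} on the thin collar. The paper obtains this from a new boundary estimate (Lemma~\ref{l:est_kj_bndry_local}/Corollary~\ref{c:est_kj_bndry}): $\|\partial_\nu \Kj\Sj\vp\|_{L^2(\partial(\Oe\cap\Ot))}=o(1)\|\vp\|_{L^2(\Oe)}$, proved by exploiting that on a~$C^1$ boundary the normals are locally~$e_n+o(1)$, that~$\partial_{x_n}$ commutes with~$1-\lm-\Delta$ (so~$\partial_{x_n}\Kj\Sj\vp$ solves a homogeneous Dirichlet problem whose data is small where it matters), a splitting of that Dirichlet problem plus the Kenig--Pipher local estimate, and the smallness of~$\B\vp$ in~$L^2$; feeding this into the~$L^2$ Neumann solvability of Lemma~\ref{l:lap_neu} gives~$\|N(\nabla \B\vp)\|_{L^2(\partial(\Oe\cap\Ot))}=o(1)\|\vp\|$, and the collar width~$d$ then yields the~$o(d)$. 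Your proposal contains no substitute for this lemma.

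Your proposed mechanism for the~$o(d)$ --- higher integrability~$\nabla\vp\in L^p$, $p>2$, of the eigenfunction on~$\Oe$ plus an equi-integrability argument on the shrinking collar --- cannot work as stated, because it concerns only the fixed domain~$\Oe$ and never compares the two solution operators. Indeed, Meyers-type~$L^p$ gradient bounds with some~$p>2$ hold already for Lipschitz domains, and~$|\Oe\setminus\Ot|=O(d)$ holds there too; if these ingredients sufficed, the same argument would prove the asymptotic formula for Lipschitz perturbations, contradicting the counterexample in Section~6.7 of~\cite{Kozlov2014}. The hypothesis~$n_1-n_2=o(1)$ must enter through a quantitative comparison of the Neumann problems on~$\Oe$ and~$\Ot$ near~$\partial(\Oe\cap\Ot)$ --- which is precisely what the paper's boundary lemma provides --- not merely as a qualitative remark. (Your steps (1)--(3) and (5) are essentially sound, and the final observation that~$\Ot\subset\Oe$ removes the second integral and the extensions matches the paper; a minor further omission is the algebraic passage from~$\lm^{-1}-\um[k]^{-1}$ to~$\Lum[k]-\Llm$, which the paper handles explicitly.)
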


\noindent Note that the main term is of order~$d$ and that the remainder is strictly
smaller as~$d \rightarrow 0$.

As an application, we then in Section~\ref{s:app} consider the case when the perturbation is of Hadamard type
and we assume that the reference domain~$\Oe$ is a~$C^{1,\alpha}$-domain. 
Indeed, if~$\Ot$ is a perturbation of~$\Oe$
in the sense that the perturbed domain~$\Ot$ can be characterized by a Lipschitz function~$h$
defined on the boundary~$\partial \Oe$ such that~$(x',x_{\nu}) \in \partial \Ot$ is 
represented by~$x_{\nu} = h(x')$, where~$(x',0) \in \partial \Oe$,~$x_{\nu}$ is 
the signed distance to~$\partial \Oe$ as defined above, and~$\nabla h = o(1)$ as~$d \rightarrow 0$ (uniformly),
we obtain the following result; see Theorem~\ref{t:c1_boundary}. 

\begin{theorem}
\label{i:t:c1_boundary}
Suppose that~$\Oe$ is a~$C^{1,\alpha}$-domain, that~$\Ot$ is a perturbation as described above, that the
problem in~{\rm(}\ref{eq:maineq}{\rm)} has a discrete spectrum, and that~$m$ is fixed. 
Then, there exists a constant~$d_0 > 0$ such that if~$d \leq d_0$, then 
\begin{equation}
\label{i:eq:c1_main_boundary}
\begin{aligned}
\Lum[k] - \Llm[m] = {} & 
\kk + o(d)
\end{aligned}
\end{equation}
for every~$k = 1,2,\ldots,\Jm[m]$. 
Here~$\kk[{}] = \kk$ is an eigenvalue of the problem
\begin{equation}
\label{eq:c1_T2_tk_intro}
\kk[{}] \ip{\vp}{\vs}{1} = 
\int_{\partial \Oe} h(x') \bigl( 
	\nabla \vp \cdot \nabla \vs 
	- \Llm \vp \vs
	\bigr) dS(x')
\quad \mbox{for all } \vs \in \Xj[m], 
\end{equation}
where~$\vp \in \Xj[m]$. 
Moreover, $\kk[1],\kk[2],\ldots,\kk[\Jm]$ in~{\rm(}\ref{i:eq:c1_main_boundary}{\rm)}
run through all eigenvalues of~{\rm(}\ref{eq:c1_T2_tk_intro}{\rm)} counting their multiplicities.
\end{theorem}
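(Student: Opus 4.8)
The plan is to deduce the boundary formula~(\ref{i:eq:c1_main_boundary})--(\ref{eq:c1_T2_tk_intro}) from the volume formula in Theorem~\ref{t:i:c1} by converting the integral over the collar~$\Oe \setminus \Ot$ into a boundary integral over~$\partial \Oe$. First I would check that the hypotheses of Theorem~\ref{t:i:c1} are met: since a~$C^{1,\alpha}$-domain is in particular~$C^1$, and since the graph representation~$x_\nu = h(x')$ with~$\nabla h = o(1)$ forces~$\Ot$ to have outward normals~$n_2$ that differ from~$n_1$ by~$o(1)$ as~$d \to 0$ (the normal to the graph is~$(-\nabla h, 1)/\sqrt{1+|\nabla h|^2}$ expressed in the local frame), the comparability of normals holds. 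One does, however, need to handle the fact that~$h$ may change sign, so~$\Ot \not\subset \Oe$ in general; the reduction to Theorem~\ref{t:i:c1} must therefore be done via the abstract machinery of~\cite{Kozlov2014} rather than by literally invoking Theorem~\ref{t:i:c1}, or else by splitting~$\partial\Oe$ into the parts where~$h \geq 0$ and~$h \leq 0$ and treating the signed collar accordingly. I would follow whichever of these the earlier sections set up; in the write-up I will assume the volume-type asymptotic~$\Lum[k] - \Llm = \tau_k + o(d)$ with~$\tau_k$ governed by the collar form~$\int_{\text{collar}} (\nabla\vp\cdot\nabla\vs - \Llm \vp\vs)\,dx$, allowing the collar to be taken with sign.

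The analytic heart of the argument is the co-area / Fubini computation on the collar. Using the signed distance~$x_\nu$ as one coordinate and~$x' \in \partial\Oe$ as the others, the collar between~$\partial\Oe$ and~$\partial\Ot$ is~$\{(x', t) : t \text{ between } 0 \text{ and } h(x')\}$, and for a fixed~$C^1$ function~$g$ on a neighborhood of~$\partial\Oe$,
\begin{equation*}
\int_{\text{collar}} g(x)\,dx = \int_{\partial\Oe} \int_0^{h(x')} g(x', t)\, J(x', t)\, dt\, dS(x'),
\end{equation*}
where~$J$ is the Jacobian of the tubular-neighborhood map, with~$J(x',0) = 1$ and~$J$ continuous (here~$C^{1,\alpha}$ regularity of~$\Oe$ is used to guarantee the normal coordinates and the Jacobian are well behaved near the boundary). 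Since~$\|h\|_\infty = O(d)$, the inner integral equals~$h(x')\, g(x',0) + o(d)$ uniformly, provided~$g$ is continuous up to~$\partial\Oe$; applying this with~$g = \partial_{x_i}\vp\, \partial_{x_i}\vs$ and~$g = \vp\vs$ for eigenfunctions~$\vp,\vs \in \Xj[m]$, and summing, yields
\begin{equation*}
\int_{\text{collar}} \bigl( \nabla\vp\cdot\nabla\vs - \Llm\vp\vs \bigr) dx = \int_{\partial\Oe} h(x') \bigl( \nabla\vp\cdot\nabla\vs - \Llm\vp\vs \bigr) dS(x') + o(d).
\end{equation*}
Thus the bilinear form defining~$\tau_k$ in~(\ref{eq:i:c1_main_eig}) and the one defining~$\kk$ in~(\ref{eq:c1_T2_tk_intro}) differ by~$o(d)$ on the finite-dimensional space~$\Xj[m]$, and a standard eigenvalue-perturbation estimate for symmetric forms on a finite-dimensional space (the eigenvalues of a symmetric bilinear form are Lipschitz in the form, in operator norm) gives~$\tau_k = \kk + o(d)$ for each~$k$, with multiplicities matching. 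Combining with~$\Lum[k] - \Llm = \tau_k + o(d)$ finishes the proof.

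The main obstacle I anticipate is twofold. First, justifying that~$\nabla\vp$, which for a~$C^{1,\alpha}$-domain is only known to be Hölder continuous up to~$\partial\Oe$ (by elliptic regularity for the Neumann problem on~$C^{1,\alpha}$-domains), is genuinely continuous there so that the pointwise restriction~$g(x',0)$ and the uniform~$o(d)$ control make sense — this is exactly where the jump from merely~$C^1$ to~$C^{1,\alpha}$ for the reference domain is essential, and it is why the application theorem requires~$\Oe$ to be~$C^{1,\alpha}$ even though the general Theorem~\ref{t:i:c1} only needs~$C^1$. Second, handling the sign of~$h$ and the geometry of the collar carefully when~$\Ot$ is not contained in~$\Oe$: one must check that the abstract estimates from~\cite{Kozlov2014} on which Theorem~\ref{t:i:c1} rests apply with the signed collar, or partition~$\partial\Oe$ and argue on each piece; I expect this to be bookkeeping rather than a genuine difficulty, but it is the place where care is needed to keep the remainder uniformly~$o(d)$.
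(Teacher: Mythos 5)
Your proposal follows essentially the same route as the paper: invoke the volume-collar asymptotic of Theorem~\ref{t:c1} (the two-sided version with extensions $\widetilde{\vp}$, which is exactly how the paper handles the fact that $h$ may change sign so that $\Ot\not\subset\Oe$), then use the $C^{1,\alpha}$ regularity of the eigenfunctions up to $\partial\Oe$ to freeze the integrand at $x_\nu=0$ and collapse the collar integral to $\int_{\partial\Oe}h(\ldots)\,dS$ with an $o(d)$ (in fact $O(d^{1+\alpha})$) error, finishing with the finite-dimensional perturbation of the eigenvalue problem on $\Xj[m]$. Your identification of where $C^{1,\alpha}$ is genuinely needed and of the sign issue for $h$ matches the paper's treatment, so the proposal is correct and essentially identical in approach.
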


\noindent
We also note here that Theorem~\ref{i:t:c1_boundary} is sharp. Indeed, the main term in~(\ref{eq:c1_T2_tk_intro}) is of order~$d$ and 
the example given in Section~6.7 in~\cite{Kozlov2014} shows that this can not be improved.

\section{Notation and Definitions}
We will use the same abstract setting and notation that was used in Kozlov and Thim~\cite{Kozlov2014}.
Let us summarize the notation.
We consider the operator~$1 -\Delta$ and a number~$\lm[{}]$ is an eigenvalue
of the operator~$1 - \Delta$ if and only if~$\lm[{}] - 1$ is an eigenvalue of~$-\Delta$. The reason for considering~$1 - \Delta$
is to avoid technical difficulties due to the eigenvalue zero.
Enumerate the eigenvalues~$\Llm[k] = \lm[k]-1$,~$k=1,2,\ldots$, of~(\ref{eq:maineq}) 
according to $0 < \lm[1] < \lm[2] < \cdots$.
Similarly, we let~$\Lum[k] = \um[{}] - 1$ be the eigenvalues of~(\ref{eq:maineq2}).
Suppose that~$\He$ and~$\Ht$ are infinite dimensional subspaces of a Hilbert space~$H$.
We denote the inner product on~$H$ by~$( \, \cdot \, , \, \cdot \,)$.
Let the operators~$\Kj \colon \Hj \rightarrow \Hj$ be
positive definite and self-adjoint for~$j=1,2$. Furthermore, let~$\Ke$ be compact.
We consider the spectral problems
\begin{equation}
\label{eq:eig_k1}
\Ke \vp = \lambda^{-1} \vp, \quad \vp \in \He,
\end{equation}
and
\begin{equation}
\label{eq:eig_k2}
\Kt U = \mu^{-1} U, \quad U \in \Ht,
\end{equation}
and denote by~$\lm[k]^{-1}$ for~$k=1,2,\ldots$ the eigenvalues of~$\Ke$.
Let~$\Xj[k] \subset \He$ be the eigenspace corresponding to eigenvalue~$\lm[k]^{-1}$. Moreover, we denote
the dimension of~$\Xj[k]$ by~$\Jm[k]$ and define~$\Xm = \Xj[1] + \Xj[2] + \cdots \Xj[m]$,
where~$m \geq 1$ is any integer. 
In this article we study eigenvalues of~(\ref{eq:eig_k2}) located in a neighborhood of~$\lm^{-1}$,
where~$m$ is fixed.
Note that it is known that there are precisely~$\Jm[m]$ eigenvalues of~(\ref{eq:maineq2}) near~$\lm^{-1}$; 
see, e.g., Lemma~3.1 in~\cite{Kozlov2014}.
We wish to describe how close they are in the case of $C^1$-domains.

Let~$\Se \colon \H \rightarrow \He$ and~$\St \colon \H \rightarrow \Ht$ be orthogonal projectors and
define~$\S$ as the restriction of~$\St$ to~$\He$. 
To compare~$\Ke$ and~$\Kt$, we define the operator~$B \colon \He \rightarrow \Ht$ as
$\B = \Kt \S - \S \Ke$. For~$\vp \in \Xm$,~$\B \vp$ is typically small in applications. 
Furthermore, we use the convention that~$C$ is a generic constant that can change from line
to line, but always depend only on the parameters. We also use the notation~$\deltf$ for
a generic function~$\deltf \colon [0,\infty) \mapsto [0,\infty)$ such that~$\deltf(\delt) = o(1)$ as~$\delt \rightarrow 0$.


\subsection{Domains in~$\R^n$}
\label{s:domains}
Let~$\Oe$ be the reference domain which will be fixed throughout. 
We will assume that~$\Oe$ and~$\Ot$ are at least Lipschitz domains.
Then there exists a positive constant~$M$ such that the boundary~$\partial \Oe$ can be covered by
a finite number of balls~$B_k$,~$k=1,2,\ldots,N$, where there exists orthogonal coordinate systems in which
\[
\Omega_1 \cap B_k= \{ y = (y', y_n) \sa y_n > h_k^{(1)}(y')\} \cap B_k
\]
where the center of~$B_k$ is at the origin and~$h_k^{(1)}$ 
are Lipschitz functions, i.e., 
\[
|h_k^{(1)}(y') - h_k^{(1)}(x')| \leq M |y' - x'|,
\]
such that~$h_k^{(1)}(0) = 0$.
We assume that~$\Ot$ belongs to the class of domains where~$\Ot$ is close to~$\Oe$ in the sense that~$\Ot$ can be described by
\[
\Omega_2 \cap B_k = \{ y = (y', y_n) \sa y_n > h_k^{(2)}(y')\} \cap B_k,
\]
where~$h_k^{(2)}$ are also Lipschitz continuous with Lipschitz constant~$M$.

The case when~$\Oe$ is a~$C^1$- or $C^{1,\alpha}$-domain is defined analogously, with the addition that
that~$h_k^{(1)} \in C^{1}(\R^{n-1})$ (or~$C^{1,\alpha}(\R^{n-1})$) such that
\[
h_k^{(1)}(0) = \partial_{x_i} h_k^{(1)}(0) = 0, \quad i=1,2,\ldots,n-1.
\] 
Note that when~$\Oe$ is a~$C^1$-domain, we obtain that for~$P, Q \in \partial \Oe$, the outward normal~$n_1$ of~$\Oe$ satisfies
\[
n_1(P) - n_1(Q) = o(1) \quad \mbox{as } |P-Q| \rightarrow 0,
\]
uniformly. 

\subsection{Perturbations of~$C^{1}$-Domains}
\label{s:c1domains}
The situation we consider is the case when the reference domain~$\Oe$ is a~$C^1$-domain and the perturbed
domain~$\Ot$ is close in the sense of Section~\ref{s:domains}. We require that~$\Ot$ is a Lipschitz domain such
that
\begin{equation}
\label{eq:grad_hk}
| \nabla ( h_k^{(1)} - h_k^{(2)}) | = o(1), \quad \mbox{as } d \rightarrow 0, 
\end{equation}
uniformly. This condition can be compared to the one we used in~\cite{Kozlov2014} for
perturbations of~$C^{1,\alpha}$-domains: 
\begin{equation}
\label{eq:grad_hk_c1a}
| \nabla ( h_k^{(1)} - h_k^{(2)}) | \leq C d^{\alpha}.
\end{equation}
Note that~$h_k^{(2)}$ are only assumed to be Lipschitz continuous and satisfy~(\ref{eq:grad_hk}) and~(\ref{eq:grad_hk_c1a}), respectively.

\section{Definition of the Operators~$\Kj$}
Let~$\Oe$ and~$\Ot$ be two domains in~$\R^n$ ($\Oe \cap \Ot \neq \emptyset$)
and put~$\H = L^2(\R^n)$ and~$\Hj = L^2(\Oj)$ for~$j = 1,2$, where
functions in~$\Hj$ are extended to~$\R^n$ by zero outside of~$\Oj$ in necessary.
For~$f \in L^2(\Oj)$,
the weak solution to the Neumann problem~$(1 - \Delta) W_j = f$ in~$\Oj$ and~$\partial_{\nu} W_j = 0$ on~$\partial \Oj$
for~$j=1,2$ satisfies 
\[
\int_{\Oj} ( \nabla W_j \cdot \nabla v  + W_j v ) \, dx = \int_{\Oj} f  v \, dx
\quad \mbox{for every } v \in H^1(\Oj),
\]
and the Cauchy-Schwarz inequality implies that
\[
\| \nabla W_j \|_{L^2(\Oj)} + \| W_j \|_{L^2(\Oj)} \leq \| f \|_{L^2(\Oj)}
\quad \mbox{for all } f \in L^2(\Oj).
\]
We define the operators~$\Kj$ on~$L^2(\Oj)$,~$j=1,2$, as the solution operators corresponding to the domains~$\Oj$,
i.e.,~$\Kj f = W_j$.
The operators~$\Kj$ are self-adjoint and positive definite, and if~$\Oj$ are, e.g., Lipschitz, also compact.

\subsection{Results for Lipschitz Domains}
\label{s:def_lipdom}
We will work with results for Lipschitz domains and then refine estimates using the additional smoothness
of the~$C^1$-case.
Let~$\Om$ be a Lipschitz domain.
The truncated cones~$\Gamma(x')$ at~$x' \in \partial \Om$ are given by, e.g.,
\[
\Gamma(x') = \{ x \in \Om \sa  |x - x'| < 2 \mbox{dist}(x, \partial \Om) \}
\]
and the non-tangential maximal function is defined on the boundary~$\partial \Om$ by
\[
\N(u)(x') = \max_{k=1,2,\ldots,N} \sup \{ |u(x)| \sa x \in \Gamma(x') \cap B_k \} .
\]
For the case when~$\Oe$ and~$\Ot$ are Lipschitz, one can show that 
\begin{equation}
\label{eq:est_max_Kj}
\| \N(\Kj u) \|_{L^2(\partial \Oj)} + \| \N(\nabla \Kj u) \|_{L^2(\partial \Oj)} \leq C \| u \|_{L^2(\Oj)}, \quad j=1,2,
\end{equation}
where the constant~$C$ depends only on the Lipschitz constant~$M$ and~$B_1,B_2,\ldots,B_N$.
We interpret~$\partial_{\nu} \Kj u = 0$ on~$\partial \Oj$ in the sense 
that~$n \cdot \nabla \Kj u \rightarrow 0$
nontangentially (limits taken inside cones~$\Gamma(x')$) at almost every point on~$\partial \Om$, where~$n$ is the outward normal. 
These results are discussed further in Section~6.2 of Kozlov and Thim~\cite{Kozlov2014}.
Let us summarize Lemmas~6.2 and~6.3 in~\cite{Kozlov2014} for convenience.

\begin{lemma}
\label{l:lap_neu}
Let~$\Om$ be a Lipschitz domain. Then,
\begin{enumerate}
\item[{\rm(i)}] if~$g \in L^2(\partial \Om)$, then there exists a unique {\rm(}up to constants{\rm)} function~$u$ in~$H^1(\Om)$ 
such that~$(1 - \Delta)u = 0$ in~$\Om$ and~$\partial_{\nu} u = g$ a.e.\ on~$\partial \Om$ in the nontangential sense, 
and moreover,
\[
\| \N(u) \|_{L^2(\partial \Om)} + \| \N(\nabla u) \|_{L^2(\partial \Om)} \leq C \| g \|_{L^2(\partial \Om)};
\]
\item[{\rm(ii)}]
if~$f \in L^2(\Om)$, then there exists a unique function~$u$ in~$H^1(\Om)$
such that~$(1 - \Delta)u = f$ in~$\Om$, and~$\partial_{\nu} u = 0$ on~$\partial \Om$ in the nontangential sense, and
\begin{equation*}
\| \N(u) \|_{L^2(\partial \Om)} + \| \N(\nabla u) \|_{L^2(\partial \Om)} \leq C \| f \|_{L^2(\Om)}.
\end{equation*}
\end{enumerate}
Here, the constant~$C$ depends only on~$M$ and~$B_1,B_2,\ldots,B_N$
\end{lemma}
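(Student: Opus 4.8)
The plan is to establish~(i) by the method of layer potentials adapted to~$1-\Delta$ on Lipschitz domains, and then to obtain~(ii) from~(i) together with a volume potential. For~(i), let~$E$ denote the fundamental solution of~$1-\Delta$ in~$\R^n$ --- which behaves like~$c_n|x|^{2-n}$ (like~$\log|x|$ when~$n=2$) near the origin and decays exponentially at infinity --- and let~$\mathcal{S}g(x) = \int_{\partial\Om} E(x-y)\,g(y)\,dS(y)$, $x\notin\partial\Om$, be the associated single-layer potential. The~$L^2$-boundedness of the Cauchy integral on Lipschitz graphs (Coifman--McIntosh--Meyer), run through the standard Calder\'on--Zygmund machinery, yields~$(1-\Delta)\mathcal{S}g = 0$ in~$\Om$, the bound~$\|\N(\mathcal{S}g)\|_{L^2(\partial\Om)} + \|\N(\nabla\mathcal{S}g)\|_{L^2(\partial\Om)} \leq C\|g\|_{L^2(\partial\Om)}$ with~$C = C(M,B_1,\ldots,B_N)$, and the jump relations~$\partial_\nu\mathcal{S}g = (\pm\tfrac{1}{2}I + K^{*})g$ on~$\partial\Om$ (nontangentially) for the interior and exterior traces, $K^{*}$ being an~$L^2(\partial\Om)$-bounded singular integral operator; this much is classical, cf.\ the layer-potential treatment of the Lipschitz Neumann problem due to Jerison--Kenig and Verchota. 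Then~(i) reduces to inverting, on~$L^2(\partial\Om)$, the boundary operator governing the interior normal derivative, say~$-\tfrac{1}{2}I + K^{*}$ (the sign fixed by the orientation of the normal): one sets~$u = \mathcal{S}\bigl((-\tfrac{1}{2}I + K^{*})^{-1}g\bigr)$ and reads off the estimate. Uniqueness --- here with no additive-constant ambiguity, since~$1\notin\ker(1-\Delta)$ --- follows from Green's identity~$\int_\Om(|\nabla u|^2 + u^2)\,dx = \int_{\partial\Om} u\,\partial_\nu u\,dS$, the integration by parts being legitimized by the nontangential bounds via an exhaustion of~$\Om$ by Lipschitz subdomains.

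The hard part will be the invertibility of~$-\tfrac{1}{2}I + K^{*}$ on~$L^2(\partial\Om)$, which is exactly where Lipschitz (rather than smooth) regularity is felt. The route is the Rellich identity: integrating a suitable divergence identity for a solution~$w$ of~$(1-\Delta)w = 0$ in~$\Om$ with~$\N(\nabla w)\in L^2(\partial\Om)$ against a vector field transverse to~$\partial\Om$ gives a two-sided comparison~$\|\partial_\nu w\|_{L^2(\partial\Om)} \approx \|\nabla_{\mathrm{tan}} w\|_{L^2(\partial\Om)}$ up to error terms bounded by~$\|w\|_{H^1(\Om)}$, which are of lower order relative to the boundary norms; in particular the zeroth-order term of~$1-\Delta$ contributes only terms of this compact type and is therefore harmless. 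Taking~$w = \mathcal{S}g$, this shows~$-\tfrac{1}{2}I + K^{*}$ (and likewise~$\tfrac{1}{2}I + K^{*}$) has closed range and finite-dimensional kernel once the lower-order terms are absorbed by a compactness argument, and standard Fredholm-theory bookkeeping (using the~$L^2$-adjoint and the interior/exterior uniqueness theorems) then upgrades this to invertibility. The injectivity input is itself a Green's-identity argument: if~$(-\tfrac{1}{2}I + K^{*})g = 0$ then~$\mathcal{S}g$ solves the homogeneous interior Neumann problem, hence vanishes in~$\Om$; being continuous across~$\partial\Om$ it then vanishes in the exterior as well (its exponential decay at infinity placing it in the natural space for the exterior problem), so its normal-derivative jump --- which is~$g$ --- is zero.

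For~(ii), given~$f\in L^2(\Om)$ I would extend it by zero to~$\tilde f\in L^2(\R^n)$ and set~$w = E*\tilde f$. Then~$(1-\Delta)w = f$ in~$\Om$, $w\in H^1(\Om)$ with~$\|w\|_{H^1(\Om)}\leq C\|f\|_{L^2(\Om)}$, and~$\|\N(w)\|_{L^2(\partial\Om)} + \|\N(\nabla w)\|_{L^2(\partial\Om)}\leq C\|f\|_{L^2(\Om)}$ --- the latter because~$\nabla w$ is a vector of Calder\'on--Zygmund operators applied to~$\tilde f$, whose nontangential maximal function on~$\partial\Om$ is dominated by the Hardy--Littlewood maximal function of~$\tilde f$; this is precisely the content of~(\ref{eq:est_max_Kj}) and the surrounding discussion. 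Setting~$g := -\partial_\nu w|_{\partial\Om}\in L^2(\partial\Om)$, with~$\|g\|_{L^2(\partial\Om)}\leq C\|f\|_{L^2(\Om)}$, and applying~(i) produces~$v$ with~$(1-\Delta)v = 0$ in~$\Om$, $\partial_\nu v = g$, and~$v\in H^1(\Om)$ (from interior estimates together with the nontangential bounds). Then~$u := w+v$ satisfies~$(1-\Delta)u = f$ in~$\Om$ and~$\partial_\nu u = 0$ on~$\partial\Om$ nontangentially, and adding the two sets of estimates gives the stated inequality; this~$u$ is the weak~$H^1(\Om)$-solution by Lax--Milgram uniqueness for the form~$\int_\Om(\nabla\phi\cdot\nabla\psi + \phi\psi)\,dx$ on~$H^1(\Om)$, and the same Green's identity as in~(i) gives uniqueness in~(ii).
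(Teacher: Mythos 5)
Your reconstruction is sound in outline, but note that the paper does not actually prove this lemma: it is stated as a summary of Lemmas~6.2 and~6.3 of Kozlov--Thim \cite{Kozlov2014}, which in turn rest on exactly the classical $L^2$ layer-potential theory for Lipschitz domains (Coifman--McIntosh--Meyer, Verchota, Dahlberg--Kenig, Jerison--Kenig) adapted to $1-\Delta$ --- so your route is essentially the standard one behind the citation rather than a new argument. Two remarks on your write-up. First, the phrase ``standard Fredholm-theory bookkeeping'' needs care: on a merely Lipschitz boundary $K^{*}$ is \emph{not} compact (compactness is precisely the $C^1$ phenomenon of Fabes--Jodeit--Rivi\`ere), so there is no Fredholm alternative to invoke; what saves you is exactly what your parenthetical says --- the Rellich estimate gives a lower bound modulo genuinely lower-order terms, hence injectivity plus closed range, and surjectivity then comes either from triviality of $\ker(-\tfrac12 I+K)$ (via exterior Dirichlet uniqueness in the nontangential $L^2$ class, i.e.\ the analogue of Lemma~\ref{l:lap_dir}, together with continuity of $\partial_\nu$ of the double layer across the boundary) or from a continuity-method/index argument deforming to the smooth case. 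Spelling out one of these is the only real gap between your sketch and a complete proof. Second, your observation that for $1-\Delta$ the Neumann solution is unique outright (no compatibility condition on $g$, no additive constants, by $\int_{\Om}(|\nabla u|^2+u^2)\,dx=\int_{\partial\Om}u\,\partial_\nu u\,dS$) is correct; the lemma's ``up to constants'' is a harmless vestige of the Laplacian case, and this is precisely why the paper works with $1-\Delta$ instead of $-\Delta$. The reduction of (ii) to (i) via the volume potential $E*\tilde f$, with $\N(\nabla w)$ controlled as in (\ref{eq:est_max_Kj}) and identification with the weak ($K_j$-type) solution by Lax--Milgram, is the standard and correct way to get the inhomogeneous statement.
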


\noindent
The corresponding lemma for the Dirichlet case is also known, and one can prove it using an argument 
similar to the one used to prove Lemmas~6.2 and~6.3 in~\cite{Kozlov2014}.

\begin{lemma}
\label{l:lap_dir}
Let~$\Om$ be a Lipschitz domain. Then,
\begin{enumerate}
\item[{\rm(i)}]
if~$g \in L^2(\partial \Om)$, then there exists a unique function~$u \in H^1(\Om)$
such that~$(1 - \Delta)u = 0$ in~$\Om$,~$u = g$ on~$\partial \Om$ in the nontangential sense,
and
\[
\| \N(u) \|_{L^2(\partial \Om)} \leq C \| g \|_{L^2(\partial \Om)};
\]
\item[{\rm(ii)}]
if~$f \in L^2(\Om)$, then there exists a unique function~$u \in H^1(\Om)$ such that~$(1-\Delta)u = f$
in~$\Om$,~$u = 0$ on~$\partial \Om$ in the nontangential sense,  
and
\[
\| \N(u) \|_{L^2(\partial \Om)} \leq C \| f \|_{L^2(\Om)}.
\]
\end{enumerate}
Here, the constant~$C$ depends only on~$M$ and~$B_1,B_2,\ldots,B_N$.
\end{lemma}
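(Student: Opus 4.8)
The plan is to follow the proof of Lemma~\ref{l:lap_neu} (Lemmas~6.2 and~6.3 in~\cite{Kozlov2014}) with the Dirichlet condition in place of the Neumann one. For part~(i) I would use the method of layer potentials adapted to the operator $1-\Delta$, relying on the $L^2$-boundedness of the Cauchy integral on Lipschitz graphs (Coifman--McIntosh--Meyer) and on Verchota's invertibility of the boundary layer operators on $L^2(\partial\Om)$ for bounded Lipschitz domains (obtained via Rellich-type identities). For part~(ii) I would produce the solution as a weak solution and deduce the non-tangential estimate by an elementary slab argument.

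For part~(i): let $E$ be the fundamental solution of $1-\Delta$ in $\R^n$; it equals the Newtonian kernel plus a locally smoother remainder and decays exponentially at infinity. Let $\mathcal D$ be the associated double-layer potential on $\Om$. Then $u=\mathcal D\phi$ satisfies $(1-\Delta)u=0$ in $\Om$, one has $\|\N(\mathcal D\phi)\|_{L^2(\partial\Om)}\le C\|\phi\|_{L^2(\partial\Om)}$ by Coifman--McIntosh--Meyer, and $\mathcal D\phi$ attains the boundary value $(\tfrac12 I+\mathcal K)\phi$ non-tangentially a.e., where $\mathcal K$ is the boundary double-layer operator for a suitable normalization. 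Writing $\mathcal K=\mathcal K_0+\mathcal K_1$ with $\mathcal K_0$ the harmonic double-layer operator and $\mathcal K_1$ having a weakly singular (hence $L^2(\partial\Om)$-compact) kernel, Verchota's theorem gives invertibility of $\tfrac12 I+\mathcal K_0$, so $\tfrac12 I+\mathcal K$ is Fredholm of index zero; injectivity on $L^2(\partial\Om)$ reduces, through the jump relations for $\mathcal D$, to uniqueness for the $L^2$-Dirichlet problem for $1-\Delta$ in $\Om$ and in its exterior, which follows from Verchota's harmonic uniqueness by the same compact-perturbation reasoning. Hence $\tfrac12 I+\mathcal K$ is invertible, $u=\mathcal D\bigl((\tfrac12 I+\mathcal K)^{-1}g\bigr)$ is the desired solution, and the stated bound follows; uniqueness in this class is again the $L^2$-Dirichlet uniqueness.

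For part~(ii): Lax--Milgram on $H^1_0(\Om)$ with the form $a(u,v)=\int_\Om(\nabla u\cdot\nabla v+uv)\,dx$ produces $u$ with $\|u\|_{H^1(\Om)}\le\|f\|_{L^2(\Om)}$. To estimate $\N(u)$, fix a boundary chart $\Om\cap B_k=\{y_n>\phi_k(y')\}$ and a height $\delta$ comparable to the geometric scale; for smooth $w$ one has $w(y',\phi_k(y')+s)=w(y',\phi_k(y')+\delta)-\int_s^\delta\partial_{y_n}w(y',\phi_k(y')+r)\,dr$, and taking the supremum over the truncated cone, the $L^2(dy')$-norm, Cauchy--Schwarz in $r$, and Fubini over the slab $\{\phi_k(y')<y_n<\phi_k(y')+\delta\}$ yields $\|\N(w)\|_{L^2(\partial\Om)}\le C\|w\|_{H^1(\Om)}$; by density this holds for $w\in H^1(\Om)$, and applying it to $u$ gives $\|\N(u)\|_{L^2(\partial\Om)}\le C\|f\|_{L^2(\Om)}$. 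That $u=0$ non-tangentially a.e.\ follows from $u\in H^1_0(\Om)$ together with $\N(u)\in L^2(\partial\Om)$, since then a Fatou-type theorem provides non-tangential limits a.e.\ which coincide with the (vanishing) Sobolev trace. Uniqueness is Green's identity: a solution with zero boundary data satisfies $\int_\Om(|\nabla u|^2+u^2)\,dx=0$. Throughout, $C$ depends only on $M$ and $B_1,\dots,B_N$, which is the dependence in the Cauchy-integral and Verchota estimates, in the compactness of $\mathcal K_1$, and in the slab estimate.

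The step requiring the most care is the passage from the harmonic case to $1-\Delta$ together with the low-regularity bookkeeping on the Lipschitz domain: checking that the zeroth-order term only produces a compact correction, so that invertibility and uniqueness persist, and justifying the non-tangential attainment of boundary data both for the double-layer potential and for the weak solution. The analytic core (Coifman--McIntosh--Meyer, Verchota) is quoted rather than reproved, exactly as in~\cite{Kozlov2014}.
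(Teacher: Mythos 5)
Your part (i) is the standard route and matches what the paper intends (the paper offers no proof of this lemma, only the remark that it is known and can be obtained as in Lemmas~6.2--6.3 of \cite{Kozlov2014}): layer potentials for $1-\Delta$, the Coifman--McIntosh--Meyer bound for $\|\N(\mathcal{D}\phi)\|_{L^2(\partial\Om)}$, Verchota's invertibility for the harmonic boundary operator, and compactness of the correction coming from the smoother part of the kernel $E-E_0$. One inaccuracy there: injectivity of $\tfrac12 I+\mathcal{K}$ does not ``follow from Verchota's harmonic uniqueness by the same compact-perturbation reasoning'' --- uniqueness is not a property that transfers under compact perturbation of the boundary operator. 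You need a direct uniqueness theorem for the $L^2$-Dirichlet problem for $1-\Delta$ with nontangential convergence (the approximating-domains/harmonic-measure argument adapts, helped by the sign of the zeroth-order term); this is repairable, but as written it is a non-argument. You also never address the claim $u\in H^1(\Om)$ in (i), which the layer-potential construction does not give for $g$ merely in $L^2(\partial\Om)$.

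The genuine gap is in part (ii). The asserted slab estimate $\|\N(w)\|_{L^2(\partial\Om)}\leq C\|w\|_{H^1(\Om)}$ is false for $n\geq 2$: your vertical-line identity only controls the maximal function along the vertical segment above each boundary point, whereas $\N(w)(x')$ is a supremum over the full cone $\Gamma(x')$, and an $H^1(\Om)$ function with an interior singularity at distance $\rho$ from the boundary (e.g.\ an iterated-logarithm blow-up, admissible in every dimension $n\geq2$) has $\N(w)=\infty$ on a surface ball of measure comparable to $\rho^{\,n-1}$, so no bound by the $H^1$-norm can hold. Nor does interior elliptic regularity rescue the argument: $f\in L^2(\Om)$ only gives $u\in H^2_{\mathrm{loc}}$, which is not locally bounded when $n\geq5$. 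The correct (and standard) way --- and what the Neumann analogue, Lemma~6.3 in \cite{Kozlov2014}, relies on --- is to write $u=\Pi f+v$, where $\Pi f$ is the Bessel/Newtonian potential of $f$ extended by zero, to invoke the known potential-theoretic estimate $\|\N(\Pi f)\|_{L^2(\partial\Om)}\leq C\|f\|_{L^2(\Om)}$, and then to apply part (i) to $v$, whose boundary datum $-\Pi f|_{\partial\Om}$ lies in $L^2(\partial\Om)$. Your slab computation does legitimately show that the $H^1_0$-solution has vanishing nontangential trace once $\N(u)\in L^2(\partial\Om)$ is available, and the Lax--Milgram and Green's-identity uniqueness parts are fine, but the central estimate of (ii) --- precisely the one the paper uses in (\ref{eq:aaa1}) --- is not established by your argument.
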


\noindent
We conclude with an extension result for Lipschitz domains; see, e.g.,~\cite[Lemma~6.4(i)]{Kozlov2014} for a proof.

\begin{lemma}
\label{l:ext}
Suppose that~$f \in H^1(\partial \Om)$ and~$g \in L^2(\partial \Om)$,
where~$\Om$ is a Lipschitz domain. Then 
there exists a function~$u \in H^1(\Om^c)$ such that~$u \rightarrow f$
and~$n \cdot \nabla u \rightarrow g$ nontangentially at almost every point on~$\partial \Om$,
where~$n$ is the outward normal of~$\Om$,
and there exists a constant~$C$ such that
\[
\| \N(u) \|_{L^2(\partial \Om)} + \|\N(\nabla u)\|_{L^2(\partial \Om)}
\leq
C ( \| f \|_{H^1(\partial \Om)} + \|g\|_{L^2(\partial \Om)} ),
\]
where~$C$ depends on~$M$ and~$B_1,B_2,\ldots,B_N$.
\end{lemma}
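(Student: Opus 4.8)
The plan is to reduce the statement to a local construction in a single coordinate patch and then patch together with a partition of unity subordinate to the cover $B_1,\dots,B_N$. Fix one ball $B_k$ in which $\partial\Om$ is the graph $y_n = h_k(y')$ of a Lipschitz function with constant $M$, and write points of $\Om^c \cap B_k$ as $y = (y', y_n)$ with $y_n < h_k(y')$. Given the boundary data $f$ and $g$, I would first flatten the boundary via the bi-Lipschitz change of variables $(y',y_n) \mapsto (y', y_n - h_k(y'))$, which maps $\Om^c\cap B_k$ onto (a piece of) the lower half-space $\{t_n < 0\}$ and has Jacobian bounded above and below by constants depending only on $M$. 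This turns the task into extending given data from $\{t_n = 0\}$ into $\{t_n < 0\}$; the price is that the Dirichlet datum $f$ remains in $H^1$ of the (flattened) boundary, and the Neumann datum, originally $g = n\cdot\nabla u$, gets replaced by a linear combination $\widetilde g$ of $g$ and the tangential derivatives of $f$ with $L^\infty$ coefficients built from $\nabla h_k$, so still $\widetilde g \in L^2$.

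Next, in the half-space I would build the extension explicitly. For the Dirichlet part, take the Poisson-type extension $u_1(t', t_n) = (f * \Phi_{|t_n|})(t')$ where $\Phi$ is a smooth, compactly supported, mean-one mollifier on $\R^{n-1}$ and $\Phi_\rho(\cdot) = \rho^{-(n-1)}\Phi(\cdot/\rho)$; standard mollifier estimates give $\N(u_1) \le C\,\mathcal M f$ and $\N(\nabla u_1) \le C\,\mathcal M(\nabla f) + C\,\mathcal M f$ pointwise on the boundary, where $\mathcal M$ is the Hardy--Littlewood maximal operator, so by the $L^2$-boundedness of $\mathcal M$ we get $\|\N(u_1)\|_{L^2} + \|\N(\nabla u_1)\|_{L^2} \le C\|f\|_{H^1}$, and $u_1 \to f$ nontangentially by the Lebesgue differentiation theorem. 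For the Neumann part, set $u_2(t',t_n) = t_n\,(\widetilde g * \Phi_{|t_n|})(t')$ (a cutoff in $t_n$ keeps it supported near the boundary); then $u_2 \to 0$ and $\partial_{t_n} u_2 \to \widetilde g$ nontangentially, while $\nabla_{t'} u_2 = t_n(\nabla \widetilde g)*\Phi_{|t_n|}$ together with the extra derivative landing on $\Phi_{|t_n|}$ contributes at worst $\mathcal M \widetilde g$, so again $\|\N(u_2)\|_{L^2} + \|\N(\nabla u_2)\|_{L^2} \le C\|\widetilde g\|_{L^2}$. Setting $u = u_1 + u_2$ in this patch handles both data simultaneously since the normal derivative of $u_1$ at the boundary is controlled (it contributes only to the $g$-side after transforming back, and can be absorbed). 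Transforming back by the inverse bi-Lipschitz map preserves all the estimates and the nontangential limits, at the cost of constants depending only on $M$.

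Finally I would glue: let $\{\chi_k\}$ be a smooth partition of unity with $\operatorname{supp}\chi_k \subset B_k$ and $\sum\chi_k = 1$ near $\partial\Om$, let $u^{(k)}$ be the patchwise extension of $\chi_k f$ and $\chi_k g$ (noting $\chi_k f \in H^1(\partial\Om)$ and $\chi_k g \in L^2(\partial\Om)$ with norms controlled by those of $f,g$), and set $u = \sum_k u^{(k)}$. Because the sum is finite and each term is controlled, $\N(u) \le \sum_k \N(u^{(k)})$ and likewise for the gradient give the claimed bound with $C = C(M, B_1,\dots,B_N)$; the boundary traces add up to $\sum_k \chi_k f = f$ and $\sum_k \chi_k g = g$ nontangentially a.e. I expect the main obstacle to be bookkeeping the normal-derivative contribution of the Dirichlet extension $u_1$ after the bi-Lipschitz flattening: in the curvilinear coordinates $\partial_{t_n}$ is not the geometric normal derivative, so one must check that the mismatch is a bounded multiple of tangential derivatives of $f$ (hence $L^2$) and can be compensated by adjusting $\widetilde g$ — this is where the Lipschitz (rather than merely $C^0$) regularity of $\partial\Om$ is essential, and it is the only place the argument is genuinely delicate rather than routine.
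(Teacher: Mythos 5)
Your proposal is essentially sound, but note that the paper itself contains no proof of Lemma~\ref{l:ext}: it simply cites Lemma~6.4(i) of \cite{Kozlov2014}, so the only comparison possible is with that reference, whose argument is of the same standard local-coordinates type as yours. Your route --- flatten each graph patch by $(y',y_n)\mapsto(y',y_n-h_k(y'))$, extend by mollification at scale comparable to the distance to the boundary, and glue with a partition of unity --- does work, and the one point you flag as delicate indeed closes, but it deserves to be made explicit since it carries the real content. First, the normal component of $\nabla u_1$ is not trivially bounded: $\partial_{t_n}(f*\Phi_{|t_n|})$ has the a priori bad bound $|t_n|^{-1}\mathcal{M}f$, and one must use that $\partial_\rho\Phi_\rho=\rho^{-1}\Psi_\rho$ with $\Psi$ of mean zero, write $\Psi=\sum_j\partial_j\Theta_j$ with $\Theta_j\in C_c^\infty$, and conclude that $\partial_{t_n}u_1=\mp\sum_j(\partial_jf)*(\Theta_j)_{|t_n|}$ is dominated by $\mathcal{M}(\nabla_{t'}f)$ and converges nontangentially a.e.\ to a fixed constant-coefficient combination of the tangential derivatives of $f$; this is precisely the $L^2$ quantity you must subtract when defining $\widetilde g$. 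Second, after flattening, the geometric normal derivative at a boundary point is $(1+|\nabla'h_k|^2)^{-1/2}\bigl[\nabla'h_k\cdot\nabla_{t'}v-(1+|\nabla'h_k|^2)\,\partial_{t_n}v\bigr]$ (up to orientation), and since $|\nabla'h_k|\le M$ a.e.\ this identity can be solved for $\widetilde g$ as a combination of $g$ and $\nabla_{t'}f$ with $L^\infty$ coefficients, giving $\|\widetilde g\|_{L^2}\le C(\|g\|_{L^2}+\|f\|_{H^1})$ with $C=C(M)$; similarly, the tangential gradient of $u_2$ must be computed by letting the derivative fall on the kernel (your $\widetilde g$ is only $L^2$), which yields the bound $\mathcal{M}\widetilde g$ and nontangential limit zero because $\int\nabla\Phi=0$. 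With the $t_n$-cutoff keeping each patchwise extension supported inside its ball (so that $u\in H^1(\Om^c)$ follows from the maximal-function bounds and the bounded collar), and with the standard equivalence of cone apertures under the bi-Lipschitz flattening, your partition-of-unity gluing then gives the stated estimate with a constant depending only on $M$ and $B_1,\ldots,B_N$.
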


\section{Main Results}

Let us proceed to prove the main results. In Section~\ref{s:c1}, we prove a key lemma concerning an estimate
for~$\partial_{\nu} \Kj \Sj \vp$ on~$\partial (\Oe \cap \Ot)$. Using this estimate, we can refine results for
Lipschitz domains that were previously developed in~\cite{Kozlov2014}, and as a result,
obtain an asymptotic formula describing the difference between~$\lm^{-1}$ and~$\um^{-1}$ in terms
of eigenfunctions of~$\Ke$.

\subsection{Boundary Estimates for $C^1$-domains}
\label{s:c1}

Since~$\partial_{\nu} \vp = 0$ on~$\partial \Oe$, we would expect that~$\partial_{\nu} \vp$ is small also on~$\Ot$
if the domains are close. 
However, since in the~$C^1$-case, we only obtain solutions with derivatives in~$L^p$, this problem becomes
more difficult than the corresponding issue in the $C^{1,\alpha}$-case (which was solved in~\cite{Kozlov2014}). 
To this end, we will exploit that locally on the boundaries~$\partial \Oj$, the normal vectors can be approximated
by constant unit vectors $e_n$ (with respect to the local coordinate system). That is, we approximate the
surface by its tangent plane at a specific point. We obtain the following result.

\begin{lemma}
\label{l:est_kj_bndry_local}
Let~$P \in \partial (\Oe \cap \Ot)$ and~$\delt > 0$ such that~$B(P, 2\delt) \subset B_k$ for some~$k$, where~$B_k$
are the balls covering~$\Oe \cap \Ot$ given in Section~\ref{s:domains}. 
Then, there exists a function~$\deltf(\delt)$ such that
\begin{equation}
\label{eq:c1_g}
\int_{\partial (\Oe \cap \Ot) \cap B(P,\delt)} |\partial_{\nu} \Kj \Sj \vp|^2 \, dS(x') \leq \deltf(\delta) \, 
\int_{\Oe} |\vp|^2 \, dx, 
\quad j=1,2,
\end{equation}
for every~$\vp \in \Xm$,  
where~$\deltf(\delt) = o(1)$ as~$\delt \rightarrow 0$.
\end{lemma}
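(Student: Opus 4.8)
The plan is to localize the problem near the point $P$, freeze the normal to the constant vector $e_n$ of the local coordinate system, and use the Neumann condition $\partial_{\nu}\vp = 0$ on $\partial\Oe$ together with the $C^1$-oscillation estimate $n_1(P) - n_1(Q) = o(1)$ to transfer smallness from $\partial\Oe$ to $\partial(\Oe\cap\Ot)$. Concretely, write $u = \Kj\Sj\vp$, so that $(1-\Delta)u$ equals $\Sj\vp$ (a fixed element of $\Xm$ in the appropriate sense) and $\partial_{\nu}u = 0$ on $\partial\Oj$ in the nontangential sense, with the maximal-function bound~(\ref{eq:est_max_Kj}) giving $\|\N(\nabla u)\|_{L^2(\partial\Oj)} \le C\|\vp\|_{L^2(\Oj)}$. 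On $\partial(\Oe\cap\Ot)\cap B(P,\delt)$ the normal $\nu$ points either along $n_1$ or along $n_2$ depending on which boundary piece we are on; in the local coordinates both differ from $e_n$ by $o(1)$ (for $n_1$ by the $C^1$-property of $\Oe$, for $n_2$ because $\nabla(h_k^{(1)}-h_k^{(2)}) = o(1)$ by~(\ref{eq:grad_hk}) and $\nabla h_k^{(1)} = o(1)$ near $0$). Hence on that cap
\[
|\partial_{\nu} u| \le |(\nu - e_n)\cdot\nabla u| + |e_n\cdot\nabla u| \le \deltf(\delt)\,|\nabla u| + |\partial_{x_n} u|,
\]
so after squaring and integrating it suffices to control $\int |\partial_{x_n} u|^2 \, dS$ over the cap and the $\deltf(\delt)|\nabla u|$ term, the latter being handled directly by~(\ref{eq:est_max_Kj}).

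The remaining task is therefore to show $\int_{\partial(\Oe\cap\Ot)\cap B(P,\delt)} |\partial_{x_n} u|^2 \, dS \le \deltf(\delt)\|\vp\|_{L^2(\Oe)}^2$. Here is where the smoothness of $\Oe$ enters: on $\partial\Oe$ itself we have $0 = \partial_{\nu}\vp = (n_1)\cdot\nabla\vp$, and since $n_1 = e_n + o(1)$ this forces $\partial_{x_n}\vp = o(1)\,|\nabla\vp|$ pointwise on $\partial\Oe\cap B(P,2\delt)$ in the nontangential sense. For the part of $\partial(\Oe\cap\Ot)$ lying on $\partial\Oe$ this is immediate. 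For the part lying on $\partial\Ot\cap\Oe$, one compares $u$ restricted to that surface with its boundary values coming from $\partial\Oe$: move along the (nearly vertical) segments of length $O(d)$ connecting corresponding points of $\partial\Ot$ and $\partial\Oe$, integrating $\partial_{x_n}u$ and the tangential derivatives, and use that $\|\N(\nabla u)\|_{L^2}$ is bounded to estimate the transported quantity. The oscillation of $\nabla u$ along such short segments need not be small pointwise, but in the $L^2$-averaged sense over the cap the contribution is $\deltf(\delt)$ because both the vertical extent ($O(d)$, hence $o(\delt)$ after we let $d\to 0$ with $\delt$ fixed — or rather, absorbed into $\deltf$) and the normal-vector defect are $o(1)$; combining with the bound $\partial_{x_n}\vp = o(1)|\nabla\vp|$ on $\partial\Oe$ closes the estimate after one more application of Lemma~\ref{l:lap_neu} and~(\ref{eq:est_max_Kj}).

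The main obstacle is precisely the last point: in the $C^1$-case we only have $L^p$ (here $L^2$) control of $\nabla u$ on the boundary and no modulus of continuity, so one cannot argue pointwise that $\partial_{x_n}u$ is small on $\partial\Ot$ just because it is small on $\partial\Oe$. The resolution is to keep everything at the level of $L^2$-averages over the cap $B(P,\delt)$, using the finiteness (not smallness) of $\|\N(\nabla u)\|_{L^2(\partial\Oj)}$ as the uniform bound, and extracting the decaying factor $\deltf(\delt)$ entirely from the geometric quantities $\nu - e_n$ and $\nabla(h_k^{(1)}-h_k^{(2)})$, which are $o(1)$ by hypothesis. One should also be careful that $\deltf$ may depend on the fixed finite-dimensional space $\Xm$ and on the cover $\{B_k\}$, but not on $\vp$ or on $d$ once $d$ is small relative to $\delt$; since $\Xm$ and the cover are fixed this is harmless.
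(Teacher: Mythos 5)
Your opening and closing steps (freezing the normal, $n_j=e_n+o(1)$, so that it suffices to bound $\partial_{x_n}\Kj\Sj\vp$, and noting that on $\partial\Oj$ itself this is immediate from the nontangential Neumann condition) agree with the paper. The gap is in the core step: bounding $\partial_{x_n}\Kj\Sj\vp$ on the part of $\partial(\Oe\cap\Ot)$ that does \emph{not} lie on $\partial\Oj$ (for $j=1$ the piece $\Oe\cap\partial\Ot$, for $j=2$ the piece $\partial\Oe\cap\Ot$). Comparing $\partial_{x_n}u$ at a point of $\partial\Ot$ with its value at the nearby point of $\partial\Oe$ by integrating along the short connecting segment would require controlling $\nabla\partial_{x_n}u$, i.e.\ second derivatives of $u=\Kj\Sj\vp$, along that segment; the only available bound, $\|\N(\nabla u)\|_{L^2(\partial\Oj)}\le C\|\vp\|_{L^2(\Oj)}$, is one derivative short and yields only an $O(1)$ bound, not $o(1)$, on the interior surface. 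You acknowledge exactly this obstruction, but the proposed resolution (work with $L^2$-averages and extract all smallness from $\nu-e_n$ and $\nabla(h_k^{(1)}-h_k^{(2)})$) only handles the normal-freezing term $(\nu-e_n)\cdot\nabla u$; it supplies no mechanism at all for the term $e_n\cdot\nabla u$ on the transferred piece of boundary, which is where the lemma actually lives.

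The paper closes this gap with an idea absent from your sketch: $\partial_{x_n}$ commutes with $1-\lm-\Delta$, so $\Ph=\hhe$ is itself a solution of the homogeneous equation in $\Oe$, with Dirichlet data on $\partial\Oe$ that is small in $L^2$ on $\partial\Oe\cap B_k$ by the Neumann condition. Splitting the data into the near part $\Phz$ (treated with the $L^2$ Dirichlet estimate of Lemma~\ref{l:lap_dir}, so $\N(\Phz)$, hence $\Phz$ on $\Oe\cap\partial\Ot$, is small) and the far part $\Phr$ (which vanishes on $\partial\Oe\cap B_k$, and is handled by the local Kenig--Pipher estimate together with the fact that $\Oe\cap\partial\Ot$ lies within distance $d$ of $\partial\Oe$) gives the required bound on the interior surface. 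Note that the legitimate version of your segment-integration appears there, but one derivative lower: it is applied to $\Phr$ itself, which vanishes on the local boundary piece and whose gradient has a controlled nontangential maximal function — not to $\partial_{x_n}u$ directly. Finally, for $j=2$ your argument never confronts the fact that $(1-\Delta)\Kt\St\vp=\St\vp$ is inhomogeneous with a right-hand side that is not small; the paper first rewrites this as $(1-\lm-\Delta)\Kt\St\vp=-\lm\B\vp$, subtracts the solution $\Psh$ with zero Dirichlet data, and uses $\|\B\vp\|^2_{L^2(\Ot)}\le Cd\|\vp\|^2_{L^2(\Oe)}$ before running the same two-part argument for the remaining piece $W$. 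Without a substitute for this elliptic-equation argument for $\partial_{x_n}u$, your proof does not go through.
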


\begin{proof}
Let~$\Bk = B(P, \, 2\delt)$.
We wish to consider~$\partial_{\nu} \Kj \Sj \vp$ on~$\partial(\Oe \cap \Ot)$. However, since~$\nabla \Kj \Sj \vp$ only exist
in the sense of~$L^2$, it is nontrivial to exploit the fact that~$\partial_{\nu} \Kj \Sj \vp$ is zero on~$\partial \Oj$.
Therefore, let us instead consider~$\hhj$ (with respect to the coordinate system in~$B_k$). 
The outward normal of~$\Oj$ is
comparable to~$e_n$ in~$B_k$ and~$\partial_{\nu} \Kj \Sj \vp = 0$ on~$\partial \Oj$, 
so we expect~\hhj{} to be small on~$\partial \Oj \cap B_k$. 
Indeed, since~$\nabla \Kj \Sj \vp \cdot n_j \rightarrow 0$ nontangentially on~$\partial \Oj$ 
and~$n_j = e_n + o(1)$ as~$\delt \rightarrow 0$, we obtain that
\begin{equation}
\label{eq:est_hh_Oe}
\int_{\partial \Oj \cap \Bk} | \hhj{} |^2 \, dS(x') \leq \deltf(\delt) \int_{\Oe} |\vp|^2 \, dx . 
\end{equation}

However, we can not expect~$\hhj{}$ to be small on all of~$\Oj$.
The idea is to use the fact that~$\partial_{x_n}$ commutes with~$(1 - \lm -\Delta)$.
Indeed, we see that if~$\Ph = \hhe$,
then~$(1 -\lm - \Delta) \Ph = 0$ in~$\Oe$ and~$\Ph = \hhe$ on~$\partial \Oe$. The case when~$j = 2$ will be treated similarly
but requires some additional steps.
Let us consider the equation $(1 -\lm - \Delta) \Ph = 0$ in~$\Oe$ and~$\Ph = \hhe$ on~$\partial \Oe$.
We split this equation in two separate parts.

{\it Part 1}. Let~$\Phz$ be the solution to~$(1 -\lm - \Delta) \Phz = 0$ in~$\Oe$,~$\Phz = \hhe$ on~$\partial \Oe \cap \Bk$,
and on~$\partial \Oe \cap \Bk^c$, we let~$\Phz = 0$.
Lemma~\ref{l:lap_dir} implies that~$\Phz$ satisfies
\begin{equation}
\int_{\partial \Oe} |N(\Phz)|^2 \, dS(x') \leq \deltf(\delt) \int_{\Oe} |\vp|^2 \, dx. 
\end{equation}
Then it follows that
\begin{equation}
\label{eq:N_Phz_est}
\int_{\Oe \cap \partial \Ot \cap \Bk} |\Phz|^2 \, dS(x') \leq \deltf(\delt) \int_{\Oe} |\vp|^2. 
\end{equation}

{\it Part 2}. Let~$\Phr$ be the solution to~$(1 -\lm - \Delta) \Phr = 0$ in~$\Oe$,~$\Phr = 0$ 
on~$\partial \Oe \cap \Bk$, 
and~$\Phr = \hhe$ on~$\partial \Oe \cap \Bk^{c}$. 
To prove an estimate for~$\Phr$ on~$\partial \Oe \cap \Bk$ similar to the one given for~$\Phz$ in~(\ref{eq:N_Phz_est}), 
we use a local estimate for solutions to the Dirichlet problem where we exploit that
the boundary data is zero on~$\Oe \cap \Bk$. 
Indeed, let~$\Bkh$ be the ball with the same center as~$\Bk$ but half the radius. 
Then, e.g., Theorem~5.24 in Kenig and Pipher~\cite{Kenig1993}, implies that
\begin{equation}
\label{eq:aaam1}
\int_{\partial \Oe \cap \Bkh} | N(\nabla \Phr)|^2 \, dS(x') \leq C \int_{\Oe \cap \Bk} |\nabla \Phr |^2 \, dx
\end{equation}
since the tangential gradient of~$\Phr$ is zero on the boundary. This, in turn, implies that the left-hand side in~(\ref{eq:aaam1})
is finite, and furthermore, since also~$\Phr = 0$ on~$\Oe \cap \Bk$, it follows that
\begin{equation}
\label{eq:N_Phr_est}
\int_{\Oe \cap \partial \Ot \cap \Bkh} | \Phr |^2 \, dS(x') \leq C d \int_{\Oe} | \vp |^2 \, dx,
\end{equation}
where~$d$ is the Hausdorff distance between~$\Oe$ and~$\Ot$. 

Equations~(\ref{eq:N_Phz_est}) and~(\ref{eq:N_Phr_est}) are sufficient to obtain that
\[
\int_{\partial (\Oe \cap \Ot) \cap \Bkh}  |N(\hhe)|^2 \, dS(x') \leq \deltf(\delt) \int_{\Oe} |\vp|^2 \, dx
\]
since~$\Ph = \Phz + \Phr$.

Turning our attention to when~$j = 2$, we see that~$(1 - \Delta) \Kt \St \vp = \St \vp$ and that this equation is
not homogeneous. Moreover, the right-hand side is not necessarily small. 
However, since~$\S \vp = \lm \Kt \S \vp - \lm \B \vp$ and~$\B \vp$ is small, we can consider
\begin{equation}
\label{eq:aaa0}
(1 - \lm - \Delta) \Kt \St \vp = - \lm \B \vp.
\end{equation}
Let~$\Psh$ be the weak solution to~$(1-\lm -\Delta) \Psh = -\lm \B \vp$ in~$\Ot$ and~$\Psh = 0$ on~$\partial \Ot$. 
Then,~$\| \Psh \|_{H^1(\Ot)} \leq C \| \B \vp \|_{L^2(\Ot)}$ and the trace of~$\Psh$ is defined on~$\partial \Om$.
Moreover, from Lemma~\ref{l:lap_dir} we obtain that
\begin{equation}
\label{eq:aaa1}
\| N(\Psh) \|_{L^2(\partial \Ot)} \leq C \| \B \vp \|_{L^2(\Ot)}.
\end{equation}
Now, put~$\Phi = \Psh + W$. Then~$(1 - \lm - \Delta) W = 0$ and~$W = \hht$ on~$\partial \Ot$. 
It is now possible to carry out steps 1 and 2 for~$W$ in~$\Ot$ analogously with~$\Ph$ in~$\Oe$, exchanging the
roles of~$\Oe$ and~$\Ot$.
Thus, using the same notation, we obtain that
\begin{equation}
\label{eq:aaa2}
\int_{\partial (\Oe \cap \Ot) \cap \Bkh}  |N(W)|^2 \, dS(x') \leq \deltf(\delt) \int_{\Oe} |\vp|^2 \, dx.
\end{equation}
Finally, Lemma~6.6 in~\cite{Kozlov2014} states that~$\| \B \vp \|^2_{L^2(\Ot)} \leq C d \|\vp\|^2_{L^2(\Oe)}$,
so this fact and equations~(\ref{eq:aaa1}) and~(\ref{eq:aaa2}) prove that 
\begin{equation}
\label{eq:aaa3}
\int_{\partial (\Oe \cap \Ot) \cap \Bkh}  |N(\hht)|^2 \, dS(x') \leq \deltf(\delt) \int_{\Oe} |\vp|^2 \, dx.
\end{equation}

We can now conclude the proof by observing that the outward normal on~$\partial (\Oe \cap \Ot)$ 
is given by~$n_1$ or~$n_2$ at almost every point,
and~$n_j = e_n + r_j$ with~$r_j = \deltf(\delt)$, $j=1,2$, so we obtain that  
\[
\int_{\partial (\Oe \cap \Ot) \cap \Bkh}  |\partial_{\nu} \Kj \Sj \vp|^2 \, dS(x') \leq \deltf(\delt) \int_{\Oe} |\vp|^2 \, dx. \qedhere
\]
\end{proof}

\noindent
The previous lemma is local in nature, but due to compactness we can prove the following corollary. 

\begin{corollary}
\label{c:est_kj_bndry}
There exists a constant~$d_0 > 0$ such that if~$d \leq d_0$, then
\begin{equation}
\label{eq:c1_g_global}
\int_{\partial (\Oe \cap \Ot)} |\partial_{\nu} \Kj \Sj \vp|^2 \, dS(x') \leq \deltf(d) \, 
\int_{\Oe} |\vp|^2 \, dx, 
\quad j=1,2,
\end{equation}
for every~$\vp \in \Xm$,  
where~$\deltf(d) = o(1)$ as~$d \rightarrow 0$.
\end{corollary}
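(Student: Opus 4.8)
The strategy is to run the local estimate of Lemma~\ref{l:est_kj_bndry_local} simultaneously on a finite cover of~$\partial(\Oe\cap\Ot)$ of bounded overlap, at a localization scale~$\delt$ that I let depend on~$d$ and tend to~$0$ as~$d\to0$. The point is that the small prefactors produced by the local argument multiply quantities that are \emph{localized} to the balls of the cover, so that summing against the bounded‑overlap property does not accumulate an unbounded constant; this is why one cannot simply quote Lemma~\ref{l:est_kj_bndry_local} as a black box and must re‑enter its proof.

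\emph{The cover.} Since~$\Oe$ is a~$C^1$-domain, $\partial\Oe$ is compact and contained in~$\bigcup_{k=1}^{N}B_k$, so there is~$\rho>0$ such that every ball of radius~$4\rho$ centered in the~$\rho$-neighborhood of~$\partial\Oe$ lies inside some~$B_k$. As~$\Oe$ and~$\Ot$ are Lipschitz with comparable normals, their boundaries are within~$Cd$ of one another, so for~$d$ small~$\partial(\Oe\cap\Ot)$ lies in the~$\rho$-neighborhood of~$\partial\Oe$. I would fix~$\delt=\delt(d)\le\rho$, to be chosen, take~$\{P_i\}_{i=1}^{L}$ a maximal~$\delt$-separated subset of~$\partial(\Oe\cap\Ot)$, and note that the balls~$B(P_i,\delt)$ cover~$\partial(\Oe\cap\Ot)$, that~$B(P_i,4\delt)\subset B_{k(i)}$ for suitable~$k(i)$, that~$L\le C\delt^{-(n-1)}$, and that~$\{B(P_i,4\delt)\}_{i}$ has overlap bounded by a constant~$\nu_0$ depending only on~$n$ and~$M$.

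\emph{The estimate on one ball.} Fix~$i$, choose the coordinate system in~$B_{k(i)}$ so that~$e_n$ is the outward normal of~$\Oe$ at~$P_i$, so that by uniform continuity of the normal of the~$C^1$-domain~$\Oe$ together with~\eqref{eq:grad_hk}, $|n_j-e_n|\le\deltf(\delt)+\deltf(d)$ on~$\partial\Oj\cap B(P_i,4\delt)$, $j=1,2$. I would then run the argument of Lemma~\ref{l:est_kj_bndry_local} on the ball~$B(P_i,2\delt)$ in place of the fixed ball~$\Bk$: for~$j=1$ the splitting~$\Ph=\hhe=\Phz+\Phr$ of Parts~1 and~2, and for~$j=2$ the same after the reduction via~$\Psh$ (solving~$(1-\lm-\Delta)\Psh=-\lm\B\vp$ with zero Dirichlet data) and~$W$ described there. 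Using that~$\hhj=(e_n-n_j)\cdot\nabla\Kj\Sj\vp$ on~$\partial\Oj$, since~$\partial_\nu\Kj\Sj\vp=0$ there --- so the data of~$\Phz$ and the ``Part~1'' term are controlled by~$(\deltf(\delt)+\deltf(d))\,\N(\nabla\Kj\Sj\vp)$ on~$\partial\Oj\cap B(P_i,2\delt)$ --- together with Lemma~\ref{l:lap_dir} and the conversion from~$\hhj$ to~$\partial_\nu\Kj\Sj\vp$, I expect an estimate of the form
\[
\int_{\partial(\Oe\cap\Ot)\cap B(P_i,\delt)}|\partial_\nu\Kj\Sj\vp|^2\,dS
\le
C\bigl(\deltf(\delt)^2+\deltf(d)^2\bigr)\int_{\partial\Oj\cap B(P_i,4\delt)}|\N(\nabla\Kj\Sj\vp)|^2\,dS
+Cd^2\delt^{-2}\int_{\Oe}|\vp|^2\,dx+R_i,
\]
where~$R_i=0$ for~$j=1$ and~$R_i\le C\int_{\partial\Ot\cap B(P_i,4\delt)}|\N(\Psh)|^2\,dS$ for~$j=2$. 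The term~$Cd^2\delt^{-2}\int_\Oe|\vp|^2$ comes from~$\Phr$: its data vanishes on~$\partial\Oe\cap B(P_i,2\delt)$, so the Kenig--Pipher estimate~\cite{Kenig1993} used in Lemma~\ref{l:est_kj_bndry_local} together with an interior Caccioppoli estimate bounds its contribution by~$Cd^2\delt^{-2}\|\N(\Phr)\|_{L^2(\partial\Oe)}^2\le Cd^2\delt^{-2}\|\vp\|_{L^2(\Oe)}^2$, the factor~$d^2$ being the Hausdorff proximity of~$\partial\Ot$ to the part of~$\partial\Oe$ where~$\Phr=0$.

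\emph{Summation and choice of scale.} Summing over~$i$, the bounded overlap~$\nu_0$ and~\eqref{eq:est_max_Kj} give~$\sum_i\int_{\partial\Oj\cap B(P_i,4\delt)}|\N(\nabla\Kj\Sj\vp)|^2\,dS\le\nu_0\int_{\partial\Oj}|\N(\nabla\Kj\Sj\vp)|^2\,dS\le C\int_\Oe|\vp|^2\,dx$, while~$\sum_i R_i\le\nu_0\|\N(\Psh)\|_{L^2(\partial\Ot)}^2\le C\|\B\vp\|_{L^2(\Ot)}^2\le Cd\int_\Oe|\vp|^2\,dx$ by~\eqref{eq:aaa1} and Lemma~6.6 in~\cite{Kozlov2014}; only the~$\Phr$-term is summed trivially, contributing~$CLd^2\delt^{-2}\int_\Oe|\vp|^2\le Cd^2\delt^{-(n+1)}\int_\Oe|\vp|^2$. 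Thus
\[
\int_{\partial(\Oe\cap\Ot)}|\partial_\nu\Kj\Sj\vp|^2\,dS
\le C\bigl(\deltf(\delt)^2+\deltf(d)^2+d+d^2\delt^{-(n+1)}\bigr)\int_\Oe|\vp|^2\,dx .
\]
Choosing, e.g., $\delt=\delt(d)=d^{1/(n+2)}$, which tends to~$0$ as~$d\to0$, one has~$\deltf(\delt(d))=o(1)$ and~$d^2\delt(d)^{-(n+1)}=d^{(n+3)/(n+2)}=o(1)$ as~$d\to0$; taking~$\deltf(d)$ to be the resulting bracket proves~\eqref{eq:c1_g_global}. The step I expect to be the main obstacle is exactly this bookkeeping: because the number~$L$ of balls grows like~$\delt^{-(n-1)}$, one must verify that all error terms except the~$\Phr$-term carry a genuinely localized right‑hand side that is tamed by the bounded‑overlap property, the remaining term being absorbed only by letting~$\delt$ decay slowly in~$d$.
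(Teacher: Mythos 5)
Your argument is correct in substance and follows the same basic strategy as the paper --- compactness plus the local Lemma~\ref{l:est_kj_bndry_local} --- but it executes that strategy far more carefully, and in doing so it repairs a real defect in the paper's two-sentence proof. The paper simply covers $\partial(\Oe\cap\Ot)$ by balls $B(P,d)$ and applies the lemma with $\delt=d$; taken as a black box this fails for exactly the reason you name: the cover has $L\sim d^{-(n-1)}$ balls while the right-hand side of (\ref{eq:c1_g}) is the \emph{global} quantity $\deltf(\delt)\int_{\Oe}|\vp|^2$, and even after localizing the Part~1 data the Part~2 contribution (\ref{eq:N_Phr_est}) is $Cd\int_{\Oe}|\vp|^2$ \emph{per ball}, so with $\delt=d$ the sum is of order $d^{2-n}\int_{\Oe}|\vp|^2$, which is not $o(1)$ for $n=2$ and diverges for $n\ge3$. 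Your two repairs are precisely what is needed: (i) re-entering the lemma's proof to check that the contributions of $\Phz$, of $\Psh$, and of the replacement $n_j=e_n+o(1)$ all carry right-hand sides that are either localized to $B(P_i,C\delt)$ --- hence summable against (\ref{eq:est_max_Kj}) by bounded overlap --- or globally $O(d)$ as in (\ref{eq:aaa1}); and (ii) decoupling the localization scale from $d$, so that the one genuinely non-local term coming from $\Phr$, of size $Cd^{2}\delt^{-2}\int_{\Oe}|\vp|^2$ per ball, still vanishes after multiplication by $L\sim\delt^{-(n-1)}$; the choice $\delt=d^{1/(n+2)}$ works. Two minor caveats, neither of which threatens the conclusion: the exponent in $Cd^{2}\delt^{-2}$ depends on the precise scaled form of the Kenig--Pipher/Caccioppoli step on a ball of radius $\delt$, but any fixed negative power $\delt^{-k}$ is handled by taking $\delt=d^{\epsilon}$ with $\epsilon<2/(k+n-1)$; and in passing from $\N(\Phz)$ on $\partial\Oj$ to the trace on the interior piece of $\partial(\Oe\cap\Ot)$ one must note that, for $d$ small, that surface lies in the nontangential approach regions of nearby points of $\partial\Oj$, which is where the uniform Lipschitz character from Section~\ref{s:domains} is used. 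What your version buys is an actual proof of the uniformity in the number of balls; what the paper's version buys is brevity, at the cost of hiding the fact that $\delt=d$ cannot work without the localization you supply.
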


\begin{proof}
By compactness, if~$d$ is small we can cover~$\partial(\Oe \cap \Ot)$ by a finite number of balls~$B(P, d)$
such that~$B(P, 2d) \subset B_k$ for some~$k$, where~$B_k$ are the covering balls from Section~\ref{s:domains}. 
By choosing~$d_0$ small enough and letting~$\delta = d$ in the previous Lemma, the result in the corollary now follows.
\end{proof}

\subsection{Proof of Theorem~\ref{t:i:c1}} 
\label{s:mainresults}
The following proposition is a reformulation of Proposition~6.10 in~\cite{Kozlov2014},
where the proof can also be found. The ``tilded'' expressions are the extensions of the corresponding
functions provided by Lemma~\ref{l:ext}. We will use this result and Corollary~\ref{c:est_kj_bndry} to prove Theorem~\ref{t:i:c1}.

\begin{proposition}
\label{p:lipschitz}
Suppose that~$\Oe$ and~$\Ot$ are Lipschitz domains in the sense of Section~\ref{s:domains}.
Then
\begin{equation}
\label{eq:lip_main}
\lm[m]^{-1} - \um[k]^{-1} = \tk + O(d^{3/2})
\quad \mbox{for } k = 1,2,\ldots,\Jm. 
\end{equation}
Here,~$\tau = \tk$ is an eigenvalue of
\begin{equation}
\label{eq:lip_main_eig}
\begin{aligned}
\tau \ip{\vp}{\vs}{1} = {} &
\lm^{-1} \int_{\Oe \setminus \Ot} \bigl( 
	(1 - \lm) \widetilde{\Kt \S \vp}  \vs
	+ \nabla \widetilde{\Kt \S \vp} \cdot \nabla \vs 
	\bigr) \, dx\\
& - 
\lm^{-1} \int_{\Ot \setminus \Oe} \bigl( 
	(1 - \lm) (\Kt \S \vp) \widetilde{\vs} 
	+ \nabla \Kt \S \vp \cdot \nabla \widetilde{\vs}  
	\bigr) \, dx 
\end{aligned}
\end{equation}
for all $\vs \in \Xj[m]$, where~$\vp \in \Xj[m]$. 
Moreover,~$\tk[1], \tk[2], \ldots, \tk[\Jm]$ in~{\rm(}\ref{eq:lip_main}{\rm)}
run through all eigenvalues of~{\rm(}\ref{eq:lip_main_eig}{\rm)} counting their multiplicities.
\end{proposition}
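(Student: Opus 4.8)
The plan is to establish the proposition in two stages: an abstract spectral reduction, followed by a concrete realization of the resulting bilinear form. For the first stage I would invoke the general perturbation machinery of~\cite{Kozlov2014}. Since~$\Ke$ is compact, self-adjoint and positive definite with eigenvalue~$\lm^{-1}$ and finite-dimensional eigenspace~$\Xm$, and~$\Kt$ is compared to~$\Ke$ through the restricted projector~$\S$, the abstract theory localizes exactly~$\Jm$ eigenvalues~$\um[k]^{-1}$ of~$\Kt$ near~$\lm^{-1}$ and identifies their deviations~$\lm[m]^{-1}-\um[k]^{-1}$ with the eigenvalues of a symmetric bilinear form on~$\Xm$ generated by the defect operator~$\B = \Kt\S - \S\Ke$. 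The remainder of this reduction is governed by a higher power of the size of~$\B$ on~$\Xm$, and the quantitative input is the bound~$\| \B\vp \|^2_{L^2(\Ot)} \leq C d \, \|\vp\|^2_{L^2(\Oe)}$ (Lemma~6.6 in~\cite{Kozlov2014}). This forces the leading form to be of order~$d$ and the error to be of order~$d^{3/2}$, so it remains to compute the abstract form explicitly and match it with~(\ref{eq:lip_main_eig}).

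For the second stage, fix~$\vp, \vs \in \Xm$. Applying~$\S$ to~$\Ke \vp = \lm^{-1}\vp$ gives~$\S\Ke\vp = \lm^{-1}\S\vp$, hence~$\B\vp = \Kt\S\vp - \lm^{-1}\S\vp$, while~$\vp$ and~$\vs$ are genuine Neumann eigenfunctions on~$\Oe$ with~$(1-\Delta)\vp = \lm\vp$ weakly, and~$W := \Kt\S\vp$ solves the Neumann problem~$(1-\Delta)W = \S\vp$ on~$\Ot$. I would pair the defect against~$\vs$ and substitute these weak formulations, which rewrites the abstract form as~$\lm^{-1}$ times the difference of the bilinear expression~$\int (\nabla W\cdot\nabla\vs + (1-\lm)W\vs)$ taken over~$\Oe$ and over~$\Ot$. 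Because the integrands coincide on the common region~$\Oe \cap \Ot$, this difference collapses at once (by the trivial set decomposition) into an integral over~$\Oe \setminus \Ot$, in which~$W$ must be continued outside~$\Ot$ and is replaced by its extension~$\widetilde{\Kt\S\vp}$, minus an integral over~$\Ot \setminus \Oe$, in which~$\vs$ must be continued outside~$\Oe$ and is replaced by~$\widetilde{\vs}$. These extensions, together with their uniform~$H^1$-type bounds, are exactly what Lemma~\ref{l:ext} supplies, and this is the essential place where the Lipschitz hypothesis of Section~\ref{s:domains} enters. Collecting terms reproduces~(\ref{eq:lip_main_eig}).

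To close the argument I would track the remainder. Every discrepancy incurred along the way --- the second-order terms of the abstract reduction, the boundary contributions from the integrations by parts, and the errors in replacing~$W$ and~$\vs$ by their extensions --- is supported on the collar~$(\Oe \setminus \Ot) \cup (\Ot \setminus \Oe)$, whose Lebesgue measure is~$O(d)$. Combining this with the trace and non-tangential maximal function estimates~(\ref{eq:est_max_Kj}) and Lemma~\ref{l:lap_neu}, which control~$\nabla \Kt\S\vp$ and~$\nabla\vs$ in~$L^2$ over the thin region, with the bound~$\| \B\vp \|^2_{L^2(\Ot)} \leq C d \, \|\vp\|^2$, and applying the Cauchy--Schwarz inequality, produces the extra factor~$d^{1/2}$ beyond the order-$d$ leading term, hence the~$O(d^{3/2})$ remainder. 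The symmetry of the form guarantees that the~$\tk$ are real, and the counting statement --- that~$\tk[1], \tk[2], \ldots, \tk[\Jm]$ exhaust all eigenvalues with multiplicities --- follows from the min-max correspondence built into the abstract reduction.

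I expect the main obstacle to be the sharp remainder bound rather than the identification of the form. Obtaining~$O(d^{3/2})$ in place of the crude~$O(d)$ requires the collar estimates to be genuinely quadratic: one factor~$d^{1/2}$ comes from the~$O(d)$ measure of the symmetric difference through Cauchy--Schwarz, and the second from controlling the boundary traces of~$\nabla \Kt\S\vp$ on~$\partial(\Oe\cap\Ot)$ uniformly in~$d$ by means of~(\ref{eq:est_max_Kj}) and Lemma~\ref{l:lap_dir}. Ensuring that all these estimates, and the extension constant of Lemma~\ref{l:ext}, depend only on the Lipschitz constant~$M$ and the fixed covering~$B_1, B_2, \ldots, B_N$, and not on~$d$, is the delicate point on which the whole order-$d^{3/2}$ conclusion rests.
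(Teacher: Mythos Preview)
The paper does not prove this proposition; it explicitly introduces it as ``a reformulation of Proposition~6.10 in~\cite{Kozlov2014}, where the proof can also be found,'' and proceeds to use it as a black box. So there is no in-text argument to compare against.

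Your sketch is consistent with the strategy of~\cite{Kozlov2014}: first an abstract spectral reduction (their Sections~3--5) that produces a finite-dimensional eigenvalue problem on~$X_m$ driven by the defect operator~$B$, with remainder quadratic in the size of~$B$; then a concrete identification of the form via the weak formulations for~$K_2 S\vp$ on~$\Omega_2$ and for~$\psi$ on~$\Omega_1$, together with the extension Lemma~\ref{l:ext} to make sense of the integrands on~$\Omega_1\setminus\Omega_2$ and~$\Omega_2\setminus\Omega_1$. The collar/Cauchy--Schwarz mechanism you describe for obtaining the extra~$d^{1/2}$ is also the right one, with the input~$\|B\vp\|_{L^2(\Omega_2)}^2 \leq Cd\|\vp\|_{L^2(\Omega_1)}^2$ from Lemma~6.6 of~\cite{Kozlov2014}.

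Two places in your sketch would need tightening to be a complete proof rather than a plan. First, the passage ``pair the defect against~$\psi$ and substitute these weak formulations'' glosses over exactly which abstract bilinear form arises (in~\cite{Kozlov2014} the leading term is $\lambda_m(S^*B\vp,\psi)_1$ up to a symmetric correction), and how the factor~$\lambda_m^{-1}$ in~(\ref{eq:lip_main_eig}) emerges; this requires using~$S\vp = \lambda_m K_2 S\vp - \lambda_m B\vp$ inside the weak identity for~$W=K_2S\vp$ and then cancelling. Second, the ``difference taken over~$\Omega_1$ and over~$\Omega_2$'' does not collapse quite as trivially as you suggest, since~$W$ is only defined on~$\Omega_2$ and~$\psi$ only on~$\Omega_1$; one must first restrict both weak identities to~$\Omega_1\cap\Omega_2$, picking up boundary terms on~$\partial(\Omega_1\cap\Omega_2)$, and it is precisely those boundary terms that one rewrites, via Green's formula and the extension lemma, as the two volume integrals over the collar regions. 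None of this changes your overall picture, but it is where the real work in~\cite{Kozlov2014} lies.
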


\noindent
Let us now prove a version of this proposition that holds specifically for~$C^1$-domains. 
We will show the following result.

\begin{theorem}
\label{t:c1}
Suppose that~$\Oe$ is a~$C^1$-domain and that~$\Ot$ is a perturbation in the sense of Section~\ref{s:c1domains} satisfying~{\rm(}\ref{eq:grad_hk}{\rm)}. 
Then
\begin{equation}
\label{eq:c1_main}
\lm[m]^{-1} - \um[k]^{-1} = \tk + o(d) 
\quad \mbox{for } k = 1,2,\ldots,\Jm. 
\end{equation}
Here,~$\tau = \tk$ is an eigenvalue of
\begin{equation}
\label{eq:c1_main_eig}
\begin{aligned}
\tau \ip{\vp}{\vs}{1} = {} &
\lm^{-1} \int_{\Oe \setminus \Ot} \bigl( 
	(1 - \lm) \vp  \vs
	+ \nabla \vp \cdot \nabla \vs 
	\bigr) \, dx\\
& - 
\lm^{-1} \int_{\Ot \setminus \Oe} \bigl( 
	(1 - \lm) \widetilde{\vp} \widetilde{\vs} 
	+ \nabla \widetilde{\vp} \cdot \nabla \widetilde{\vs}  
	\bigr) \, dx 
\end{aligned}
\end{equation}
for all $\vs \in \Xj[m]$, where~$\vp \in \Xj[m]$. 
Moreover,~$\tk[1], \tk[2], \ldots, \tk[\Jm]$ in~{\rm(}\ref{eq:c1_main}{\rm)}
run through all eigenvalues of~{\rm(}\ref{eq:c1_main_eig}{\rm)} counting their multiplicities.
\end{theorem}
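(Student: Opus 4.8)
The plan is to upgrade Proposition~\ref{p:lipschitz} to the sharper statement by replacing the $O(d^{3/2})$ remainder with $o(d)$ and by simplifying the bilinear form in~(\ref{eq:lip_main_eig}) to the one in~(\ref{eq:c1_main_eig}). The starting point is that in Proposition~\ref{p:lipschitz} the quadratic form involves $\widetilde{\Kt \S \vp}$ and $\Kt \S \vp$, whereas in Theorem~\ref{t:c1} these are replaced by $\vp$ (and its extension $\widetilde{\vp}$). So the first task is to show that $\Kt \S \vp$ is close to $\vp$ in the relevant norms on the thin sets $\Oe \setminus \Ot$ and $\Ot \setminus \Oe$. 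Since $\vp \in \Xm$ is an eigenfunction of $\Ke$, we have $\Ke \vp = \lm^{-1}\vp$, i.e.\ $(1-\Delta)^{-1}$ on $\Oe$ applied to $\vp$ gives $\lm^{-1}\vp$; rewriting, $\vp = \lm \Ke \vp$. Comparing with $\Kt \S \vp$ and using $\B\vp = \Kt \S \vp - \S \Ke \vp$ together with Lemma~6.6 of~\cite{Kozlov2014} ($\|\B\vp\|_{L^2(\Ot)}^2 \le Cd\|\vp\|_{L^2(\Oe)}^2$) should give an $L^2$-bound of order $\sqrt d$ for the difference of $\Kt \S \vp$ and $\vp$ globally; but that alone only reproduces the old estimate. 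The point is to localize to the $d$-thin symmetric difference and use the $C^1$ boundary estimates.

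The key new ingredient is Corollary~\ref{c:est_kj_bndry}, which says that $\partial_\nu \Kj \Sj \vp$ is $o(1)$ in $L^2(\partial(\Oe \cap \Ot))$ rather than merely $O(1)$. The plan is to integrate by parts (Green's formula) in the thin region $\Oe \setminus \Ot$ (respectively $\Ot \setminus \Oe$), turning a volume integral of $\nabla(\Kt\S\vp - \vp)\cdot\nabla\vs + (1-\lm)(\Kt\S\vp - \vp)\vs$ into a boundary integral over $\partial(\Oe\setminus\Ot) \subset \partial\Oe \cup \partial(\Oe\cap\Ot)$. On $\partial\Oe$ the conormal derivative of $\vp$ vanishes by the Neumann condition, and on $\partial(\Oe\cap\Ot)$ the conormal derivative is controlled by Corollary~\ref{c:est_kj_bndry}. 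Combining this with the non-tangential maximal function estimates of Lemma~\ref{l:lap_neu} (to control the traces of $\Kj\Sj\vp$ and $\vp$ on the boundary pieces) and with the fact that the sets have width $O(d)$, one should get that the difference between the form in~(\ref{eq:lip_main_eig}) and the form in~(\ref{eq:c1_main_eig}) is $o(d)$. Since the $\tk$ are eigenvalues of these bilinear forms on the fixed finite-dimensional space $\Xj[m]$, a perturbation of the form by $o(d)$ perturbs the eigenvalues by $o(d)$ (here I would invoke the standard fact, also used in~\cite{Kozlov2014}, that eigenvalues of a self-adjoint form on a finite-dimensional space depend Lipschitz-continuously on the form). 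Together with the $O(d^{3/2}) = o(d)$ remainder already present in Proposition~\ref{p:lipschitz}, this yields~(\ref{eq:c1_main}).

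More concretely, I would proceed as follows. First, write $\Kt\S\vp - \vp$; since $(1-\lm-\Delta)\Kt\S\vp = -\lm\B\vp$ in $\Ot$ (equation~(\ref{eq:aaa0})) and $(1-\lm-\Delta)\vp = 0$ in $\Oe$, on the overlap $\Oe\cap\Ot$ the difference $w := \Kt\S\vp - \vp$ satisfies $(1-\lm-\Delta)w = -\lm\B\vp$, which is small in $L^2$. Second, estimate $w$ and $\nabla w$ on the thin sets: on $\Oe\setminus\Ot$ I need bounds for $\vp$ and $\Kt\S\vp$ separately (the latter extended appropriately), using $\|\N(\vp)\|_{L^2(\partial\Oe)} \le C\|\vp\|_{L^2(\Oe)}$ and $\|\N(\Kt\S\vp)\|_{L^2(\partial\Ot)}$, $\|\N(\nabla\Kt\S\vp)\|_{L^2(\partial\Ot)} \le C\|\vp\|_{L^2(\Oe)}$ from~(\ref{eq:est_max_Kj}); integrating these over a tube of width $d$ around the boundary gives $L^2$-bounds of order $\sqrt d$ on the thin set, but the gradient terms paired against conormal derivatives via Green's formula pick up the extra smallness from Corollary~\ref{c:est_kj_bndry}. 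Third, assemble: the contribution from $\Oe\setminus\Ot$ in~(\ref{eq:lip_main_eig}) minus that in~(\ref{eq:c1_main_eig}) is $\lm^{-1}\int_{\Oe\setminus\Ot}(\nabla w\cdot\nabla\vs + (1-\lm)w\vs)\,dx$, which after integration by parts equals $\lm^{-1}\int_{\partial(\Oe\setminus\Ot)}(\partial_\nu w)\vs\,dS$ up to the interior term $\lm^{-1}\int_{\Oe\setminus\Ot}(-\lm\B\vp)\vs\,dx$; the interior term is $O(d)\cdot\|\B\vp\|_{L^2} = O(d)\cdot O(\sqrt d) = o(d)$ (using that $\vs$ restricted to the $d$-thin set has $L^2$-norm $O(\sqrt d)$), and the boundary term splits into a piece on $\partial\Oe$ where $\partial_\nu\vp = 0$ and $\partial_\nu(\Kt\S\vp)$ needs care, and a piece on $\partial(\Oe\cap\Ot)$ controlled by $\deltf(d)$ from the corollary. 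The analogous analysis handles $\Ot\setminus\Oe$ with the extension $\widetilde{\vp}$. The main obstacle, I expect, will be bookkeeping the boundary term on $\partial\Oe \cap \partial(\Oe\setminus\Ot)$: there $\partial_\nu\vp = 0$ but $\partial_\nu(\Kt\S\vp)$ is only known to be $o(1)$ in $L^2(\partial(\Oe\cap\Ot))$, not on $\partial\Oe$, so one has to either route everything through $\partial(\Oe\cap\Ot)$ by choosing the right decomposition of $\partial(\Oe\setminus\Ot)$, or absorb this piece into the extension estimate; getting the geometry of these thin Lipschitz collars right, so that every boundary integral lands on a set where one of the two conormal derivatives is controlled, is the delicate part.
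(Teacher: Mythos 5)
Your high-level reduction is the right one --- start from Proposition~\ref{p:lipschitz}, note that the $O(d^{3/2})$ remainder is already $o(d)$, and show that after writing $\Kt\S\vp=\B\vp+\lm^{-1}\S\vp$ the $\B\vp$-part of the form~(\ref{eq:lip_main_eig}) contributes only $o(d)$, with Corollary~\ref{c:est_kj_bndry} as the new ingredient --- but the mechanism you propose for exploiting the corollary does not work as stated. Your Green's formula is applied in the thin sets $\Oe\setminus\Ot$ and $\Ot\setminus\Oe$ and relies on the identity $(1-\lm-\Delta)w=-\lm\B\vp$ holding there. That identity holds only in $\Oe\cap\Ot$: on $\Oe\setminus\Ot$ neither $\Kt\S\vp$ nor $\B\vp$ is defined (and on the boundary piece $\partial\Oe\cap\Ot^{c}$ the trace $\partial_\nu\Kt\S\vp$ does not even make sense), and the function that actually enters~(\ref{eq:lip_main_eig}) on $\Oe\setminus\Ot$ is the extension $\widetilde{\Kt\S\vp}$ supplied by Lemma~\ref{l:ext}, which satisfies no differential equation. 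So the integration by parts that is supposed to convert the collar integral into boundary terms controlled by Corollary~\ref{c:est_kj_bndry} plus a small interior term is simply unavailable; the ``bookkeeping'' obstacle you flag at the end is structural, not cosmetic. There is also a factor slip: $\Kt\S\vp$ is close to $\lm^{-1}\vp$, not to $\vp$, so your $w=\Kt\S\vp-\vp$ is not small, and the difference of forms you set out to estimate contains a non-negligible multiple of the main term; the object that must be shown small is exactly $\B\vp$ (and its extension), and the substitution produces the prefactor $\lm^{-2}$ that reappears in the proof of Theorem~\ref{t:c1_boundary}.

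The paper's proof avoids any integration by parts in the collars. Set $V=\B\vp$ in $\Oe\cap\Ot$; then $(1-\Delta)V=0$ there, and its Neumann data on $\partial(\Oe\cap\Ot)$ are $\partial_\nu\Kt\S\vp$ on $\partial\Oe\cap\Ot$ and $-\partial_\nu\Ke\vp$ on $\Oe\cap\partial\Ot$, i.e.\ precisely the quantities that Corollary~\ref{c:est_kj_bndry} makes $o(1)$ in $L^2(\partial(\Oe\cap\Ot))$. Lemma~\ref{l:lap_neu}(i) then gives $\|N(V)\|_{L^2(\partial(\Oe\cap\Ot))}+\|N(\nabla V)\|_{L^2(\partial(\Oe\cap\Ot))}\le\deltf(d)\|\vp\|_{L^2(\Oe)}$, the extension $\widetilde{\B\vp}$ of Lemma~\ref{l:ext} inherits these bounds, and the elementary thin-collar estimate $\int_{\Oe\setminus\Ot}|u|^2\,dx\le Cd\int_{\partial(\Oe\cap\Ot)}N(u)^2\,dS$ combined with Cauchy--Schwarz yields the $o(d)$ bound for the $\widetilde{\B\vp}$-terms, with the analogous argument on $\Ot\setminus\Oe$; no equation for the extension and no boundary terms on $\partial\Oe\cap\Ot^{c}$ are ever needed. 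If you replace your $w$ by $\B\vp$ and route the smallness through this Neumann problem in the overlap rather than through Green's formula in the collars, your argument essentially becomes the paper's.
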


\begin{proof}
We need to prove that~(\ref{eq:lip_main_eig}) can be expressed as~(\ref{eq:c1_main_eig}) up
to a term of order~$o(d)$.
Since~$\Kt \S \vp = \B\vp + \lm^{-1} \S \vp$, we let~$\widetilde{\Kt \S \vp} = \widetilde{\B\vp} + \lm^{-1} \widetilde{\vp}$,
where~$\widetilde{\B\vp}$ is the extension of~$\B \vp$ from~$\Oe \cap \Ot$, and~$\widetilde{\vp}$ is the extension of~$\vp$ from~$\Oe$, 
both provided by Lemma~\ref{l:ext}. We show that~$\widetilde{\B\vp}$ is small and that~$\lm^{-1} \widetilde{\vp}$ gives the main term.
To this end, let~$V = \B\vp$ in~$\Oe \cap \Ot$. 
Then~$(1 - \Delta)V = 0$ in~$\Oe \cap \Ot$,~$\partial_{\nu} V = \partial_{\nu} \Kt \S \vp$ on~$\partial \Oe \cap \Ot$,
and~$\partial_{\nu} V = -\partial_{\nu} \Ke \vp$ on~$\Oe \cap \partial \Ot$. Using Corollary~\ref{c:est_kj_bndry}
and Lemma~\ref{l:lap_neu}, we then obtain that
\[
\| N(V) \|_{L^2(\partial(\Oe \cap \Ot))} + \| N(\nabla V) \|_{L^2(\partial(\Oe \cap \Ot))} \leq \deltf(d) \| \vp \|_{L^2(\Oe)}, 
\]
where~$\deltf(d) = o(1)$ as~$d \rightarrow 0$,
and thus,
\[
\| N(\widetilde{\B\vp}) \|_{L^2(\partial(\Oe \cap \Ot))} + \| N(\nabla\widetilde{\B\vp}) \|_{L^2(\partial(\Oe \cap \Ot))} 
\leq \deltf(d) \| \vp \|^2_{L^2(\Oe)}. 
\]
Now, the Cauchy-Schwarz inequality implies that
\[
\begin{aligned}
\int_{\Oe \setminus \Ot} |\nabla \widetilde{\B \vp} \cdot \nabla \vs| \, dx  &\leq
\biggl( \int_{\Oe \setminus \Ot} |\nabla \widetilde{\B\vp}|^2 \, dx \biggr)^{1/2}
\biggl( \int_{\Oe \setminus \Ot} |\nabla \vs|^2 \, dx \biggr)^{1/2}\\
&\leq
C d \left( \int_{\partial(\Oe \cap \Ot)} N(\nabla \widetilde{\B \vp})^2 \, dS(x') \right)^{1/2}
\biggl( \int_{\Oe \setminus \Ot} |\nabla \vs|^2 \, dx \biggr)^{1/2}\\
& = o(d),
\end{aligned}
\]
and similarly,
\[
\begin{aligned}
\int_{\Oe \setminus \Ot} | \widetilde{\B \vp} \vs| \, dx  &\leq
C d \left( \int_{\partial(\Oe \cap \Ot)} N(\widetilde{\B \vp})^2 \, dS(x') \right)^{1/2}
\biggl( \int_{\Oe \setminus \Ot} |\vs|^2 \, dx \biggr)^{1/2}\\
& = o(d).
\end{aligned}
\]
Analogously, one can show that the corresponding expressions involving~$\B\vp$ on~$\Ot \setminus \Oe$ are also of order~$o(d)$. 
\end{proof}

\noindent
To pass from~$\lm^{-1} - \um^{-1}$
to~$\Lum[k] - \Llm$, observe that
\[
\begin{aligned}
\lm^{-1} - \um[k]^{-1} &= \lm^{-2} \biggl( \frac{\lm}{\um[k]} \bigl( \um[k] - \lm \bigr)  \biggr)
= \lm^{-2} \biggl( \um[k] - \lm - \frac{(\um[k] - \lm)^2}{\um[k]} \biggr),
\end{aligned}
\]
where~$(\um[k] - \lm)^2 = O(d^2)$ since~$\Oe$ and~$\Ot$ are at least Lipschitz; see Corollary~6.11 in~\cite{Kozlov2014}.
Note also that if it is the case that~$\Ot \subset \Oe$, we can simplify the previous theorem by removing the second integral in~(\ref{eq:c1_main_eig})
and avoid the use of extensions of eigenfunctions; compare with the statement of Theorem~\ref{t:i:c1} in the introduction.

\section{$C^1$-perturbations of~$C^{1,\alpha}$-domains}
\label{s:app}
Suppose that~$\Oe$ is a $C^{1,\alpha}$-domain and that 
it is possible to characterize the perturbed domain~$\Ot$ by a Lipschitz function~$h$
defined on the boundary~$\partial \Oe$ such that~$(x',x_{\nu}) \in \partial \Ot$ is 
represented by~$x_{\nu} = h(x')$, where~$(x',0) \in \partial \Oe$ and~$x_{\nu}$ is the signed distance to the 
boundary~$\partial \Oe$ (with~$x_{\nu} < 0$ when~$x \in \Oe$).
We assume that~$\nabla h = o(1)$ as~$d \rightarrow 0$ (uniformly).
In this case, we can simplify the expression given in Theorem~\ref{t:c1} and avoid the use of extensions by
stating the formula~(\ref{eq:c1_main}) as a boundary integral.

\begin{theorem}
\label{t:c1_boundary}
Suppose that~$\Oe$ is a~$C^{1,\alpha}$-domain and that~$\Ot$ is  as described above.
Then,
\begin{equation}
\label{eq:holder_main_boundary}
\begin{aligned}
\lm[m]^{-1} - \um[k]^{-1} = {} & 
\tk + o(d)
\end{aligned}
\end{equation}
for~$k = 1,2,\ldots,\Jm$. Here,~$\tau = \tk$ is an eigenvalue of
\begin{equation}
\label{eq:holder_main_boundary_eig}
\tau \ip{\vp}{\vs}{1} =
\lm^{-2} 
\int_{\partial \Oe} h(x') \bigl( 
	(1 - \lm)  \vp \vs
	+ \nabla \vp \cdot \nabla \vs 
	\bigr)  dS(x') \quad \mbox{for all } \vs \in \Xj[m],
\end{equation}
where~$\vp \in \Xj[m]$. Moreover,~$\tk[1], \tk[2], \ldots, \tk[\Jm]$ in~{\rm(}\ref{eq:holder_main_boundary}{\rm)}
run through all eigenvalues of~{\rm(}\ref{eq:holder_main_boundary_eig}{\rm)} counting their multiplicities.
\end{theorem}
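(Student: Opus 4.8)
The plan is to deduce Theorem~\ref{t:c1_boundary} from Theorem~\ref{t:c1} by rewriting the volume integrals over the thin symmetric difference region $(\Oe \setminus \Ot) \cup (\Ot \setminus \Oe)$ as a single boundary integral over $\partial \Oe$ weighted by $h$, up to an error of order $o(d)$. First I would observe that since $\Oe$ is $C^{1,\alpha}$ and $\Ot$ is described by the signed-distance graph $x_\nu = h(x')$ over $\partial \Oe$ with $\nabla h = o(1)$, the set $\Ot \setminus \Oe$ consists of the points $(x', x_\nu)$ with $0 < x_\nu < h(x')$ (where $h(x') > 0$) and $\Oe \setminus \Ot$ of those with $h(x') < x_\nu < 0$ (where $h(x') < 0$). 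In particular both pieces are contained in a tubular neighborhood of $\partial \Oe$ of width $O(d)$, the normal-coordinate map $(x', x_\nu) \mapsto x' + x_\nu n_1(x')$ is a bi-Lipschitz change of variables there with Jacobian $1 + O(x_\nu) = 1 + O(d)$, and the $C^{1,\alpha}$ regularity guarantees this coordinate system is well-defined and the tubular neighborhood theorem applies.

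Next I would handle the two volume integrals in~(\ref{eq:c1_main_eig}) term by term. For the integral over $\Oe \setminus \Ot$, where the integrand $(1-\lm)\vp\vs + \nabla\vp\cdot\nabla\vs$ involves only the genuine eigenfunctions (no extensions), I would write the integral in normal coordinates as $\int_{\partial \Oe} \int_{h(x')}^{0} F(x', x_\nu)(1 + O(x_\nu))\, dx_\nu\, dS(x')$ where $F$ is the integrand. Since $\vp, \vs \in \Xj[m]$ are $C^1$ up to the boundary (because $\Oe$ is $C^{1,\alpha}$ and eigenfunctions of the Neumann Laplacian on such domains have $C^1$ regularity — more than enough, $H^2$ already suffices for what follows), $F$ is continuous up to $\partial \Oe$, so $\int_{h(x')}^0 F(x',x_\nu)\,dx_\nu = -h(x') F(x', 0) + o(h(x')) = -h(x') F(x',0) + o(d)$ pointwise, with the error uniformly $o(d)$ because $|h| \le C d$ and $F$ is uniformly continuous. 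Integrating over $\partial \Oe$ and absorbing the Jacobian correction (which contributes $O(d)\cdot O(d) = O(d^2)$) gives $\int_{\Oe\setminus\Ot} F\,dx = -\int_{\partial\Oe} h(x') F(x',0)\,dS(x') + o(d)$. The same argument for $\Ot \setminus \Oe$, now with a $+h(x')$ sign from the orientation of the $x_\nu$-interval, gives $\int_{\Ot\setminus\Oe}(\ldots)\,dx = \int_{\partial\Oe} h(x') \widetilde F(x',0)\,dS(x') + o(d)$; but on $\partial \Oe$ the extensions $\widetilde\vp, \widetilde\vs$ agree with $\vp, \vs$ by the matching condition in Lemma~\ref{l:ext}, so $\widetilde F(x',0) = F(x',0)$. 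Substituting both into~(\ref{eq:c1_main_eig}) and noting the two contributions add (the minus sign in front of the second integral in~(\ref{eq:c1_main_eig}) combines with the sign difference) yields the factor $\lm^{-1} \cdot (-(-\int h F) - \int h F)$... so I would carefully track that the net result is $\lm^{-1}\int_{\partial\Oe} h (1-\lm)\vp\vs + \nabla\vp\cdot\nabla\vs\, dS$; combined with the conversion factor $\lm^{-1}$ already present and the passage from $\lm^{-1}-\um^{-1}$ as recorded after the proof of Theorem~\ref{t:c1}, this produces the claimed $\lm^{-2}$ prefactor in~(\ref{eq:holder_main_boundary_eig}).

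Finally, I would invoke the stability of eigenvalues under $o(d)$-perturbations of the quadratic form: since the bilinear form on the right of~(\ref{eq:c1_main_eig}) differs from that on the right of~(\ref{eq:holder_main_boundary_eig}) by a form bounded by $o(d)\|\vp\|_1\|\vs\|_1$ on the finite-dimensional space $\Xj[m]$, the eigenvalues $\tk$ of the two problems differ by $o(d)$, and composing with the $o(d)$ remainder already in~(\ref{eq:c1_main}) gives~(\ref{eq:holder_main_boundary}). I expect the main obstacle to be the bookkeeping in the change of variables to normal coordinates: one must verify that for a merely $C^{1,\alpha}$ (not $C^2$) domain the signed-distance function and the tubular neighborhood are regular enough that the Jacobian is $1 + O(d)$ and the foliation by normal segments is a genuine bi-Lipschitz chart on a width-$O(d)$ collar — this is standard but needs $\alpha > 0$ and deserves a careful statement, perhaps citing the regularity of the distance function to a $C^{1,\alpha}$ hypersurface. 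A secondary point requiring care is justifying the pointwise-to-uniform passage in $\int_h^0 F\,dx_\nu = -hF(\cdot,0) + o(d)$; this follows from uniform continuity of $F$ up to $\partial\Oe$ together with the uniform bound $|h| \le Cd$ and the uniform smallness of $\nabla h$, but it is the one place where the hypothesis $\nabla h = o(1)$ (rather than just $|h| = O(d)$) is genuinely used, since it controls the tilt of the graph relative to the normal direction.
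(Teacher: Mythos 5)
Your proposal is correct and follows essentially the same route as the paper: both deduce Theorem~\ref{t:c1_boundary} from Theorem~\ref{t:c1} by using the $C^{1,\alpha}$-regularity of the eigenfunctions to replace the integrand by its boundary trace and collapsing the thin volume integrals over $\Oe \setminus \Ot$ and $\Ot \setminus \Oe$ into boundary integrals weighted by $h$ (the paper quotes the quantitative $O(d^{1+\alpha})$ estimate from Corollary~6.17 of~\cite{Kozlov2014} where you use uniform continuity of $\vp$ and $\nabla\vp$ up to the boundary, which also suffices for $o(d)$). The one place where your bookkeeping wobbles --- the sign and the $\lm^{-1}$ versus $\lm^{-2}$ prefactor --- reflects an inconsistency in the paper itself rather than a flaw in your method: the displayed computation in the paper's proof carries the prefactor $\lm^{-2}$, which is what {\rm(}\ref{eq:c1_main_eig}{\rm)} ought to read (substituting $\widetilde{\Kt \S \vp} = \widetilde{\B\vp} + \lm^{-1}\widetilde{\vp}$ into {\rm(}\ref{eq:lip_main_eig}{\rm)} contributes an extra factor $\lm^{-1}$), so you should not expect {\rm(}\ref{eq:c1_main_eig}{\rm)} as printed to reproduce {\rm(}\ref{eq:holder_main_boundary_eig}{\rm)} literally.
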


\begin{proof}
Since~$\Oe$ is a~$C^{1,\alpha}$-domain, we can use results from the proof of Corollary~6.17 in~\cite{Kozlov2014}.
In that proof, we showed that~$\vp \in C^{1,\alpha}(\Oe)$ and also that~$\vp$ can be extended to a function~$\widetilde{\vp} \in C^{1,\alpha}(\R^n)$
such that
\[
\int_{\Oe \setminus \Ot} \bigl( |\vp(x) - \vp(x',0)|^2 + 
	|\nabla \vp(x) - \nabla \vp(x',0)|^2 \bigr) \, dx
\leq
  C d^{1+\alpha} \, \| \vp \|_{L^2(\Oe)}^2,
\]
with the corresponding estimate holding for~$\widetilde{\vp}$ on~$\Ot \setminus \Oe$.
Hence, Theorem~\ref{t:c1} implies that $\lm^{-1} - \um[k]^{-1}$ is given by
\[
\begin{aligned}
&\lm^{-2} \biggl(  
\int_{\partial \Oe \cap \Ot^c}  
	\int_{0}^{h(x')} \bigl( (1 - \lm)  \vp(x',0)\vs(x',0)
	+ \nabla \vp(x',0) \cdot \nabla \vs(x',0) 
	\bigr) \, dx_{\nu} \, dS(x')\\
& \qquad -
\int_{\partial \Oe \cap \Ot} \int_0^{-h(x')} \bigl( 
	(1 - \lm) \widetilde{\vp}(x',0)\widetilde{\vs}(x',0)
	+ \nabla \widetilde{\vp}(x',0) \cdot \nabla \widetilde{\vs}(x',0)
	\bigr)  \, dx_{\nu} \, dS(x') \biggr)\\
& \quad + o(d).
\end{aligned}
\]
The desired conclusion follows from this statement.
\end{proof}

\def\bibname{References}

\end{document}